\DeclareMathOperator{\diff}{d}
\DeclareMathOperator{\Id}{Id}
\newtheorem{theorem}{Theorem}
\newtheorem{remark}[theorem]{Remark}
\def\MM#1{\boldsymbol{#1}}
\newcommand{\pp}[2]{\frac{\partial #1}{\partial #2}} 
\newcommand{\dd}[2]{\frac{\diff#1}{\diff #2}}
\newcommand{\contract}[1]{#1\righthalfcup}
\journal{Journal of Computational Physics}
\begin{document}
\linenumbers
\begin{frontmatter}

%% Title, authors and addresses

%% use the tnoteref command within \title for footnotes;
%% use the tnotetext command for the associated footnote;
%% use the fnref command within \author or \address for footnotes;
%% use the fntext command for the associated footnote;
%% use the corref command within \author for corresponding author footnotes;
%% use the cortext command for the associated footnote;
%% use the ead command for the email address,
%% and the form \ead[url] for the home page:
%%
%% \title{Title\tnoteref{label1}}
%% \tnotetext[label1]{}
%% \author{Name\corref{cor1}\fnref{label2}}
%% \ead{email address}
%% \ead[url]{home page}
%% \fntext[label2]{}
%% \cortext[cor1]{}
%% \address{Address\fnref{label3}}
%% \fntext[label3]{}

\title{A finite element exterior calculus framework for the rotating 
  shallow-water equations}

%% use optional labels to link authors explicitly to addresses:
%% \author[label1,label2]{<author name>}
%% \address[label1]{<address>}
%% \address[label2]{<address>}

\author[ic]{C. J. Cotter}
\author[ex]{J. Thuburn}
\address[ic]{Department of Aeronautics, Imperial College London,
London SW7 2AZ, U.K.}
\address[ex]{College of Engineering, Mathematics and Physical Sciences,
University of Exeter, Exeter EX4 4QF, U.K.}

\begin{abstract}
%% Text of abstract
  We describe discretisations of the shallow water equations on the
  sphere using the framework of finite element exterior calculus,
  which are extensions of the mimetic finite difference framework
  presented in Ringler, Thuburn, Klemp, and Skamarock (Journal of
  Computational Physics, 2010). The exterior calculus notation
  provides a guide to which finite element spaces should be used for
  which physical variables, and unifies a number of desirable
  properties.  We present two formulations: a ``primal'' formulation
  in which the finite element spaces are defined on a single mesh, and
  a ``primal-dual'' formulation in which finite element spaces on a
  dual mesh are also used. Both formulations have velocity and layer
  depth as prognostic variables, but the exterior calculus framework
  leads to a conserved diagnostic potential vorticity. In both
  formulations we show how to construct discretisations that have
  mass-consistent (constant potential vorticity stays constant),
  stable and oscillation-free potential vorticity advection.
\end{abstract}

\begin{keyword}
Finite element exterior calculus \sep Potential vorticity
\sep Numerical weather prediction \sep Shallow water equations
\MSC 76M10 \sep 65M60
\end{keyword}

\end{frontmatter}

\section{Introduction}

In a recent paper on horizontal grids for global weather and climate
models, \cite{StTh2012} listed a number of desirable properties that a
numerical discretisation should have, which can be paraphrased as
accurate representation of geostrophic adjustment, mass conservation,
curl-free pressure gradient, energy conserving pressure terms, energy
conserving Coriolis term, steady geostrophic modes, and
absence/control of spurious modes. Of this list as presented here, the
first property could be said to relate to the stability and accuracy
of the discrete Laplacian formed from divergence and gradient
operators, whilst the next five all relate to mimetic properties
(\emph{i.e.} the numerical discretisations exactly represent
differential calculus identities such as $\nabla\times\nabla=0$), and
the last property relates to the kernels of the various discretised
operators (see \cite{Ro+2005,LeRoPo2007,LePo2008} and related papers
by Le Roux and coworkers for extended discussion of these issues in
the context of finite element methods). In the context of the rotating
shallow-water equations on the sphere, which represent the standard
nonlinear framework for investigating horizontal grids for global
models, the C-grid staggering on the latitude-longitude grid combined
with an appropriate choice of reconstruction of the Coriolis term
provides all of these properties, but leaves us with a grid system
with a polar singularity. This, together with a need for models with
variable resolution, has started a quest for alternative grids and
discretisations that satisfy these properties. 

The extension of the C-grid to triangular meshes (and the finite
element analogue, the RT0-P0 discretisation) satisfies the first six
properties and has been popular in both atmosphere and ocean
applications (\cite{WaCa1998,BoRi2005}), however it is now well
understood that the triangular C-grid supports spurious
inertia-gravity mode branches because of the decreased ratio of
velocity degrees of freedom (DOFs) to pressure DOFs relative to
quadrilaterals (from 2:1 to 3:2) \cite{Da2010,Ga2011}. More recently,
a Coriolis reconstruction for the hexagonal C-grid was derived in
\cite{Th08} that provides the mimetic properties described above, and
this was extended to arbitrary orthogonal C-grids (grids in which dual
grid edges that join pressure points intersect the primal grid edges
orthogonally) in \cite{ThRiSkKl2009}. The hexagonal C-grid has an
increased ratio of velocity DOFs to pressure DOFs (from 2:1 to 3:1),
and so does not support spurious inertia-gravity mode branches, but
does have a branch of spurious Rossby modes. This reconstruction can
be used to construct energy and enstrophy conserving C-grid
discretisations for the nonlinear rotating shallow-water equations
using the vector invariant form \cite{RiThKlSk2010}, in which mimetic
properties are used to produce a velocity-pressure formulation in
which the diagnosed potential vorticity is locally conserved in a
shape-preserving advection scheme, and is consistent with the discrete
mass conservation (\emph{i.e.} constant potential vorticity stays
constant in the unforced case).

Two directions remain outstanding from this approach, namely the
relaxation of the orthogonality requirement which constrains cubed
sphere grids so that grid resolution increases much more quickly in
the corners than at the middle of the faces \cite{PuLi2007}, and the
construction of higher-order operators to avoid grid imprinting. Two
recent papers by the authors attempted to address these issues. In
\cite{ThCo2012} a framework was set up to generalise the mimetic
approach of \cite{RiThKlSk2010} to non-orthogonal grids, but the
method of constructing sufficiently high-order operators was not
clear. Meanwhile, in \cite{CoSh2012}, it was shown that mixed finite
element methods in the framework of finite element exterior calculus
(see \cite{ArFaWi2006} for a review) provide the first six properties
listed above, plus sufficient flexibility to adjust the ratio of
velocity DOFs to pressure DOFs to 2:1 to avoid spurious mode
branches. The BDFM1 space on triangles and the RTk hierarchy of spaces
on quadrilaterals were advocated as examples of spaces that satisfy
that ratio.  However, in that paper it was not clear how the extension
to nonlinear shallow-water equations would be made.  In this paper we
address both of these open questions by describing a finite element
exterior calculus framework for the shallow-water equations, which
enables us to write the equations in a very compact form that is
coordinate-free, and reveals the underlying structure behind the
mimetic properties. The goal is to have a numerical discretisation for
the shallow water equations with velocity and layer thickness as
prognostic variables, but with a conserved diagnostic potential
vorticity. We shall discuss two formulations: a primal grid
formulation in which potential vorticity is represented on a
continuous finite element space, and a primal-dual grid formulation
that makes use of the discrete Hodge star operator introduced in
\cite{Hi2001a,Hi2001b} in which potential vorticity is represented on
a discontinuous finite element space. In the latter case,
discontinuous Galerkin or finite volume methods can be used for
locally conservative, bounded, mass-consistent potential vorticity
advection, whilst in the former case we show that streamline-upwind
Petrov-Galerkin methods with discontinuity-capturing schemes can be be
incorporated into the framework to provide conservative, high-order,
stable, non-oscillatory advection of potential vorticity.

Throughout the paper we express our formulations in the language of
differential forms. In \cite{ArFaWi2006,ArFaRa2010} it was shown that
this language provides a unifying structure for a wide range of
different finite element spaces, which provides a coherent framework
for finite element approximation theory and stability theory where
previously there was a broad range of bespoke techniques of proof for
specific cases. This framework has yielded new finite element spaces
and new stability proofs. In this paper, we make use of this framework
to design new numerical schemes for the rotating shallow-water
equations. The approach makes clear what kind of geometric objects are
being dealt with in the equations, and whether they should be
interpreted as point values, edge integrals, or cell integrals. In
particular, in makes it clear which terms involve the metric (and are
necessarily more complicated, especially on unstructured grids), and
which do not (and hence should be easy to discretise in a simple and
efficient way). Furthermore, the exterior derivative $\diff$ is a very
simple operation, since it requires no metric information; this should
be reflected by choosing a simple discrete form of $\diff$. The
fundamental reason why curl-grad and div-curl both vanish is because
$\diff^2=0$; fundamentally these are very simple properties and this
should be reflected in the discretisation.

The rest of this paper is structured as follows. In Section \ref{d
  forms} we provide a ``hands-on'' introduction to the calculus of
differential forms, then write the rotating shallow water equations in
differential form notation. In Section \ref{primal grid formulation}
we describe our primal grid finite element exterior calculus
formulation of the shallow water equations, and in Section \ref{dual
  grid formulation} we describe our primal/dual grid formulation. In
Section \ref{numerical}, we present some numerical results obtained
using these methods. Finally, in Section \ref{summary and outlook}, we
provide a summary and outlook.

\section{Differential forms on manifolds}
\label{d forms}
In this section, we introduce the required concepts from the language
of differential forms, in an informal manner where we shall quote a
number of basic results without proof. For more rigorous definitions,
the reader is referred to \cite{MaRa1999,ArFaWi2006,Ho2011}. We then
combine these concepts to write the rotating shallow-water equations
on the sphere in differential form notation.

\subsection{Differential form preliminaries}

\paragraph{Solution domain} We shall consider the case in which the
solution domain $\Omega$ is a closed compact oriented two-dimensional
surface. In applications the main surfaces of interest are the surface
of the sphere, or a rectangle in the $x-y$ plane with periodic
boundary conditions in both Cartesian directions. For brevity of
notation we do not consider domains with boundaries; this avoids the
need to include boundary terms when integrating by parts, although
they can easily be included.

It is useful to define local coordinates on a patch $U\subset \Omega$
\emph{via} an invertible mapping $\phi_U:U \to V\subset\mathbb{R}^2$; the
coordinates of a point $\MM{x}\in U$ are given by the value of
$(x^1,x^2)=\phi_U(\MM{x})$.

\paragraph{Vector fields} The tangent space $T_{\MM{x}}\Omega$
associated with a point $\MM{x}\in\Omega$ is the space of vectors that
are tangent to $\Omega$ at $\MM{x}$. A vector field $\MM{u}$ on
$\Omega$ is a mapping from each point $\MM{x}\in\Omega$ to the tangent
space $T_{\MM{x}}\Omega$, \emph{i.e.} it is a velocity field that is
everywhere tangent to $\Omega$. We denote $\mathfrak{X}(\Omega)$ as
the space of vector fields on $\Omega$. On a coordinate patch $U$ with
coordinates $(x^1,x^2)$, a vector field $\MM{u}$ can be expanded in
the basis $(\partial/\partial x^1,\partial/\partial x^2)$ as
$\sum_{i=1}^2u^i\pp{}{x^i}$.

\paragraph{Differential forms} In this paper we shall make use of
three types of differential forms: 0-forms (which are simply
scalar-valued functions), 1-forms, 2-forms. In general, 1-forms are
used to compute line integrals, and 2-forms are used to compute
surface integrals. We shall denote $\Lambda^k$ as the space of
$k$-forms.

\paragraph{1-forms} 
Cotangent vectors at a point $\MM{x}\in\Omega$ are the dual objects to
tangent vectors, \emph{i.e.}, linear mappings from $T_{\MM{x}}\Omega$
to $\mathbb{R}$. The space of cotangent vectors at $\MM{x}$ is written
as $T^*_{\MM{x}}\Omega$. A differential 1-form $\omega$ 
assigns a cotangent vector $v\in T^*_{\MM{x}}\Omega$ to each point
$\MM{x}\in\Omega$. This means that each 1-form $\omega$ defines a
mapping from vector fields $\MM{u}$ to scalar functions, with the
corresponding scalar function $\omega(\MM{u})$ evaluated at a point
$\MM{x}$ being written as $\omega(\MM{u})(\MM{x})$.

In local coordinates, we can obtain a basis $(\diff x^1,\diff x^2)$
for 1-forms that is dual to the basis $(\partial/\partial
x^1,\partial/\partial x^2)$ for vector fields, and we can
expand 1-forms as
\begin{equation}
\omega = \sum_i\omega_i\diff x^i.
\end{equation}
A 1-form can be integrated along a one-dimensional oriented curve
$C\subset\Omega$ using the usual definition of line integration
\begin{equation}
\int_C \omega = \int_{\phi_U(C)}\sum_i\omega_i\diff x^i.
\end{equation}
Due to the change-of-variables formula for integration, this
definition is coordinate independent.

\paragraph{$2$-forms}
A 2-form  is a function that assigns a skew-symmetric
bilinear map $T_{\MM{x}}\Omega\times T_{\MM{x}}\Omega \to \mathbb{R}$
on the tangent space $T_{\MM{x}}\Omega$ to each point
$\MM{x}\in\Omega$, that is used to define surface integration on
$\Omega$.

\paragraph{Wedge product}
The wedge product of a $k$-form and a $l$-form is a $k+l$-form,
and satisfies the following conditions:
\begin{enumerate}
\item Bilinearity:
\begin{equation}
(a\alpha + b\beta)\wedge \omega 
= a(\alpha\wedge \omega) + b(\beta\wedge \omega), \qquad
\alpha\wedge(a\omega + b\epsilon)
= a(\alpha\wedge \omega) + b(\alpha\wedge \epsilon),
\end{equation}
where $a$ and $b$ are scalars, $\alpha$ and $\beta$ are $k$-forms
and $\omega$ and $\epsilon$ are $l$-forms.
\item Anticommutativity:
\begin{equation}
\omega \wedge \gamma = (-1)^{kl}\gamma \wedge \omega,
\end{equation}
for a $k$-form $\omega$ and an $l$-form $\gamma$, and
\item Associativity:
\begin{equation}
(\omega \wedge \gamma) \wedge \kappa = \omega \wedge (\gamma \wedge \kappa).
\end{equation}
\end{enumerate}
Here, we only consider two cases:
\begin{enumerate}
\item For two 1-forms $\alpha$ and $\beta$ on
$\Omega$, the wedge product $\alpha\wedge \beta$ is a 2-form on
$\Omega$, defined by
\begin{equation}
\alpha\wedge\beta (\MM{v}_1,\MM{v}_2)
 = \alpha(\MM{v}_1)\beta(\MM{v}_2)-
\alpha(\MM{v}_2)\beta(\MM{v}_1),
\end{equation}
for all pairs of vector fields $\MM{v}_1$, $\MM{v}_2$.
\item 
The
wedge product of a scalar function (0-form) $f$ with a $k$-form
$\omega$ is simply the arithmetic product:
\begin{equation}
f\wedge \omega = f\omega.
\end{equation}
\end{enumerate}
From these properties it may be deduced that the wedge product of two
arbitrary 1-forms $\omega$, $\gamma$ may be written in coordinates
as 
\begin{equation}
\omega \wedge \gamma = \alpha \diff x^1\wedge \diff x^2,
\end{equation}
for some scalar function $\alpha$, and hence this is the general form
for 2-forms in Cartesian coordinates.

\paragraph{Integration of 2-forms and the surface form}

In coordinates, the integral of a 2-form $\omega=\alpha \diff x^1
\wedge \diff x^2$ over a 2-dimensional submanifold $M\subset\Omega$
\begin{equation}
\int_M \omega = \int_{\phi_U(M)}\alpha \diff x^1\diff x^2.
\end{equation}
This definition is coordinate independent, due to the change of
variables formula. For chosen oriented coordinates, there exists a
unique $\alpha_S$ such that this integral provides the surface area of
each submanifold $M$ (using a suitable Riemannian metric on $M$, for
example using the Euclidean metric inherited from the three
dimensional space in which $\Omega$ is embedded). The corresponding
2-form is called the surface form, and is written
\begin{equation}
\diff S = \alpha_S\diff x^1\wedge \diff x^2.
\end{equation}
This definition is also coordinate independent, and hence we may
write any 2-form in the form $\omega=\beta \diff S$, for a scalar
function $\beta$.

\paragraph{Contraction with vector fields}
Contractions of $k$-forms with vector fields are used to calculate
advective fluxes. In general, the contraction of a vector field $\MM{u}$
with a $k$-form $\omega$ results in a (k-1)-form, denoted
$\contract{\MM{u}}\omega$. The contraction of a vector field $\MM{u}$
with a $0$-form is zero, and with a 1-form $\omega$ is simply the
scalar function
\begin{equation}
\contract{\MM{u}}\omega(\MM{x}) = \omega(\MM{u})(\MM{x}).
\end{equation}
In general, the contraction is linear, \emph{i.e.}
$\contract{\MM{u}}(a(x)\omega_1+b(x)\omega_2)
=a(x)\contract{\MM{u}}\omega_1+b(x)\contract{\MM{u}}\omega_2$, for two
scalar functions $a(x)$ and $b(x)$, and two $k$-forms $\omega_1$ and
$\omega_2$. The contraction of a vector field $\MM{v}$ with a 2-form
$\omega$ is the 1-form $\contract{\MM{u}}\omega$ defined by
\begin{equation}
\left(\contract{\MM{u}}\omega\right)(\MM{v}) = \omega(\MM{u},\MM{v}),
\end{equation}
for all vector fields $\MM{v}$, and so may be written in coordinates
as
\begin{equation}
\contract{\MM{u}} \alpha \diff x^1\wedge \diff x^2
= \alpha\left(u_1 \diff x^2 - u_2 \diff x^1\right).
\end{equation}

\paragraph{Identification of vector fields with 1-forms}
To write equations of motion using differential forms it is necessary
to make an identification between vector fields and differential forms
(vector field proxies).  In this framework we shall make use of two
different identifications of vector fields with 1-forms\footnote{In
  general, on an $n$-dimensional manifold $M$, there is one
  identification of vector fields with 1-forms, and one with $n-1$
  forms, but we are working with 2-dimensional manifolds here.}.
\begin{enumerate}
\item $\MM{u}\in\mathfrak{X}(\Omega)\mapsto \tilde{u} \in \Lambda^1$ defined by
\begin{equation}
\tilde{u}(\MM{v})(\MM{x}) = \langle\MM{u},\MM{v}\rangle(\MM{x}), \quad
\forall \MM{v}\in \mathfrak{X}(\Omega),
\end{equation}
where $\langle \MM{u},\MM{v}\rangle$ is an inner product on
$\mathfrak{X}(\Omega)$. In coordinates, with
$\MM{u}=\sum_iu^i\pp{}{x^i}$, $\MM{v}=\sum_iv^j\pp{}{x^i}$, we have
\begin{equation}
\langle \MM{u},\MM{v}\rangle = \sum_{ij}u^ig_{ij}v^j,
\end{equation}
where $g_{ij}$ is the metric tensor associated with the inner product,
and hence
\begin{equation}
\tilde{u} = \sum_i\tilde{u}_i\diff x^i, \quad \mbox{ with }
\tilde{u}_i = \sum_jg_{ij}u^j.
\end{equation}

This identification is used to compute circulation integrals
\begin{equation}
\int_C \MM{u}\cdot \diff\MM{x} = \int_C \tilde{u},
\end{equation}
along curves $C\subset \Omega$, and hence is associated with the curl
operator. We shall use the notation $\tilde{u}$ to denote the 1-form
obtained from a vector field $\MM{u}$ using this identification.
\item The second identification is written using the
contraction with $\diff{S}$,
\begin{equation}
\MM{u} \mapsto \contract{\MM{u}}\diff{S},
\end{equation}
and is used to compute flux integrals
\begin{equation}
\int_C \contract{\MM{u}}\diff{S}
\end{equation}
across curves $C\subset \Omega$, and hence is associated with the
divergence operator.
\end{enumerate}

\paragraph{Exterior derivative}
The differential operator (exterior derivative) $\diff$ neatly encodes
all of the vector calculus differential operators \emph{e.g.}, div,
grad, curl \emph{etc.} In general, $\diff$ maps $k$-forms to
$(k$+$1)$-forms, and satisfies:
\begin{enumerate}
\item For scalar functions $f$, $\diff f=\sum_i\pp{f}{x^i}\diff x^i$
  in coordinates.
\item Product rule: for $\omega\in\Lambda^k$ and $\gamma\in
  \Lambda^l$, $\diff(\omega \wedge \gamma)=(\diff\omega)\wedge\gamma +
  (-1)^k\omega\wedge(\diff\gamma)$.
\item Closure: $\diff (\diff\omega) = \diff^2\omega = 0$.
\end{enumerate}

Standard vector calculus differential operators on scalar functions
$f$ and vector fields $\MM{u}$ defined on $\Omega$ are obtained from
the two vector field proxies:
\begin{eqnarray}
\diff f & = & \widetilde{\nabla f}, \\
\diff f & = & -\contract{(\nabla^\perp f)}\diff{S}, \\
\diff(\widetilde{u}) & = & 
(\nabla^\perp\cdot\MM{u})\diff{S}, \\
\diff(\contract{\MM{u}}\diff S) & = & \nabla\cdot\MM{u}\diff{S},
\end{eqnarray}
where $\nabla$, $\nabla^\perp=\hat{\MM{k}}\times \nabla$,
$\nabla^\perp\cdot=\hat{\MM{k}}\cdot\nabla\times$ and $\nabla\cdot$ are
vector calculus differential operators defined intrinsically on the
two-dimensional surface $\Omega$ with $\hat{\MM{k}}$ being the unit
vector normal to the manifold $\Omega$. The closure property
$\diff^2=0$ then leads to the following vector identities for the two
identifications of vector fields with 1-forms:
\begin{eqnarray}
\label{complex 1}
0&=&\diff^2 f = \diff(\widetilde{\nabla f})
= (\nabla^\perp\cdot\nabla f)\diff{S}, \\
\label{complex 2}
0&=& \diff^2 f = -\diff(\contract{\nabla^\perp f}\diff{S}) =
-(\nabla\cdot\nabla^\perp f) \diff{S}.
\end{eqnarray}
These identities are crucial for geophysical applications since they
dictate the scale separation between slow divergence-free and fast
divergent dynamics.

\paragraph{Stokes' theorem and integration by parts}

The general form of Stokes' theorem for $\omega\in \Lambda^k$ is 
\begin{equation}
\int_M \diff\omega = \int_{\partial M}\omega,
\end{equation}
where $M$ is a $k+1$-dimensional submanifold of $\Omega$, $\omega$ is
a $k$-form (and hence $\diff\omega$ is a $(k+1)$-form), and $\partial
M$ is the $k$-dimensional submanifold corresponding to the boundary of
$M$. Combining Stokes' theorem with the product rule provides the
integration by parts formula
\begin{equation}
\int_M (\diff\omega)\wedge\gamma = \int_{\partial M}\omega\wedge\gamma
+(-1)^{k-1}\int_M\omega\wedge(\diff\gamma),
\end{equation}
for $\gamma \in \Lambda^l$, for ($k$+$l$+1)-dimensional manifolds $M$ with
($k$+$l$)-dimensional boundary $\partial M$.

\paragraph{Hodge star}
The Hodge star operator $\star$ maps from $k$-forms to (2-$k$)-forms,
and is defined relative to a chosen metric on the manifold $\Omega$
(in our case we use the usual Euclidean metric from $\mathbb{R}^3$).
 It is used in this paper to write the $L_2$-inner product between
two $k$-forms $\omega$ and $\gamma$ by
\begin{equation}
\langle \omega, \gamma \rangle_{L_2} = \int \omega \wedge \star\gamma
= \int \gamma \wedge \star\omega,
\end{equation}
and is also used to write the Coriolis term. The Hodge star is linear
(\emph{i.e.}, $\star(a(x)\omega+b(x)\gamma)=a(x)\star\omega +
b(x)\star\gamma$ for scalar functions $a$, $b$ and $k$-forms $\omega$,
$\gamma$). 

Here we omit the intrinsic definition and just state 
the effect of the Hodge star on vector field proxies:
\begin{eqnarray}
\star f & = & f\diff{S}, \\
\star f\diff{S} & = & f, \\
\star\tilde{u} & = & \contract{\MM{u}}\diff{S} = 
\widetilde{u^{\perp}}, \\
\star\contract{\MM{u}}\diff{S} & = & -\tilde{u}
= \contract{\MM{u}^\perp}\diff{S},
\end{eqnarray}
where $\MM{u}^\perp=\hat{\MM{k}}\times\MM{u}$, which is also a vector
field on $\Omega$.  For two vector fields $\MM{w}$ and $\MM{u}$, we
have
\begin{equation}
\langle\MM{w},\MM{u}\rangle\diff S=\tilde{w}\wedge\star\tilde{u}.
\end{equation}
From the presence of $\hat{\MM{k}}\times$ in these formulas it becomes
clear that the Hodge star is useful for expressing the Coriolis
term. Note that $\star\star=\Id$ for 0- and 2-forms, and
$\star\star=-\Id$ for 1-forms. Since $\contract{\MM{u}}\diff{S}$ is
quite a lengthy notation, we shall use $\star\tilde u$ to denote the
second vector field proxy for a vector field $\MM{u}$.

\paragraph{Dual differential operator}
We define $\delta$ as the dual differential operator from
${\Lambda}^k$ to ${\Lambda}^{k-1}$ that is dual to
$d$, \emph{i.e.}
\begin{equation}
\int_{\Omega} \gamma \wedge \star \delta \omega = \int_{\Omega}
\diff\gamma \wedge \star\omega, \quad \forall
\gamma\in{\Lambda}^{k-1}, \, \omega\in{\Lambda}^k.
\end{equation}
We note that $\delta^2\omega=0$, since
\begin{eqnarray}
\int \gamma \wedge \star \delta^2\omega & = &
\int \diff\gamma \wedge \star \delta \omega \\
& = & \int \diff^2\gamma \wedge \star \omega \\
& = & 0 \qquad \forall \gamma \in \Lambda^{k-2}, \,
\omega \in \Lambda^k.
\end{eqnarray}

\subsection{Rotating shallow-water equations in differential form
  notation}
We have now established enough notation to write the rotating
shallow-water equations on $\Omega$ in differential form notation,
which will be our starting point to develop finite element
approximations in Section \ref{FEEC}. We begin from the following form
of the rotating shallow-water equations:
%L_uu = i_u du
\begin{eqnarray}
  \pp{}{t}\MM{u} + (\zeta+f)\MM{u}^\perp + \nabla
\left(g(D+b) + \frac{1}{2}|\MM{u}|^2\right)
  & = & 0, \\
  \pp{}{t}D + \nabla\cdot(\MM{u}D) & = & 0, 
\end{eqnarray}
where $\MM{u}$ is the velocity, $\zeta=\hat{\MM{k}}\cdot\nabla\times
\MM{u}=\nabla^\perp\cdot\MM{u}$ is the vorticity, $D$ is the layer
depth, $b$ is the height of the bottom surface, $g$ is the
acceleration due to gravity, $f$ is the Coriolis parameter and
$\MM{u}^\perp=\hat{\MM{k}}\times\MM{u}$. This form of the equations,
is known in the numerical weather prediction community as the ``vector
invariant form'' \cite{ArLa1981}. It is widely used because it is easy
to relate to the vorticity budget; we shall show that it leads in a
straightforward computation to local conservation of potential
vorticity $q=(\zeta+f)/D$, and that this computation only involves
properties of $\diff$, $\wedge$ and $\star$ that can be preserved by
the finite element exterior calculus. It is also easy to relate to the
energy budget, and the demonstration of conservation of energy also
only involves these properties so this can again be preserved by the
finite element exterior calculus.

Using the notation that we have described above, we can rewrite these
equations as
\begin{eqnarray}
\label{u eqn}
\pp{}{t}\tilde{u} + \star
\underbrace{\tilde{u}(\zeta+f)}_{\tilde{Q}} + \diff\left(g(D+b)+
  K\right) & = & 0, \\
\label{D eqn}
\pp{}{t}D\diff{S} + \diff\star(\tilde{u}D) & = & 0, \\
\label{zeta eqn}
\diff\tilde{u} & = & \zeta\diff{S},
\end{eqnarray}
where $K=|\MM{u}|^2/2=\tilde{u}\wedge\star\tilde{u}\diff S/2$.  The choice of a
1-form for equation \eqref{u eqn} is natural since we can integrate it
to obtain a circulation equation around a closed loop $C$,
\begin{equation}
\dd{}{t}\int_C\tilde{u} + \int_C \star{\tilde{Q}} +
\underbrace{\int_C \diff\left(g(D+b)-K\right)}_{=0}=0,
\end{equation}
or apply $\diff$ to obtain an evolution equation for the vorticity.
The choice of a 2-form for equation \eqref{D eqn} is natural since we
can integrate it to obtain a mass budget in a control area $A$,
\begin{equation}
\dd{}{t}\int_A D\diff{S} + \int_{\partial A} \star\tilde{u}D=0,
\end{equation}
where $\partial A$ is the boundary of $A$. Note that equation
\eqref{u eqn} naturally makes use of the circulation 1-form vector
field proxy $\tilde{u}$ whilst equation \eqref{D eqn} make use of the
other 1-form vector field proxy $\star\tilde{u}$. When we choose
finite element spaces in the next section, we will need to choose one
proxy or the other since they come with different interelement
continuity requirements for $\MM{u}$.

Applying $\diff$ to equation \eqref{u eqn} and making use of
$\diff^2=0$ and the definition of Hodge star immediately gives the
vorticity equation
\begin{equation}
\pp{}{t}\zeta\diff{S} + \diff\star(\tilde{u}(\zeta+f)) =  0,
\end{equation}
which is in the same flux form as the mass equation (equation \eqref{D
  eqn}).  The potential vorticity (PV) $q$ is defined from
\begin{equation}
(\zeta + f)\diff{S} = q D\diff{S},
\end{equation}
and hence we obtain the law of conservation of potential vorticity
\begin{eqnarray}
\label{pv}
\pp{}{t} \left(qD\diff{S}\right) + \diff\star(\tilde{u}qD)
=0.
\end{eqnarray}
Note that if $q$ is constant then
\begin{equation}
\left(\pp{q}{t}D\diff{S}\right) + q\left(
\underbrace{
\pp{}{t}D\diff{S}+
\diff\star(\tilde{u}D)}
_{=0}\right)=0 \implies \pp{q}{t}=0,
\end{equation}
which means that $q$ remains constant. This is what is meant by 
consistency of equation \eqref{pv} with equation \eqref{D eqn}.

Our goal is to design a framework for finite element discretisations
that has $\MM{u}$ and $D$ as the prognostic variables, yet preserves
the conservation law structure of equations \eqref{D eqn} and
\eqref{pv}.  Furthermore, we shall show how stabilisations for these
conservation laws (which are required for meteorological applications)
can be incorporated into this framework.

\section{Finite element exterior calculus formulation}
\label{FEEC}
In this section we develop finite element exterior calculus
approximations to equations (\ref{u eqn}-\ref{D eqn}) and demonstrate
their conservation properties.

\subsection{Finite element spaces}
The fundamental idea of finite element exterior calculus applied to
the rotating shallow-water equations is to choose finite element
spaces for the discretised variables $\MM{u}^h$, $\zeta^h$ and $D^h$
such that the operator $\diff$ maps from one space to another, so that
the vector calculus identities \eqref{complex 1} and \eqref{complex 2}
still hold. The difficulty is that continuity of $\MM{u}^h$ in the
normal direction across element boundaries is required to compute
$\diff\star\tilde{u}^h$ (required to compute fluxes), whilst
continuity in the tangent direction across element boundaries is
required to compute $\diff\tilde{u}^h$ (required to compute the
relative vorticity $\zeta^h$).  On a single grid, we cannot have both,
and thus we must choose to construct finite element spaces such that
only one of \eqref{complex 1} or \eqref{complex 2} hold in the strong
form, and the other will hold in the weak form after integrating by
parts. This amounts to choosing one of $D^h$ and $\zeta^h$ to have a
continuous finite element space and the other to have a discontinuous
space. A discontinuous space allows for discontinuous Galerkin methods
which are locally conservative and allow for shape preserving
advection schemes, and in meteorogical applications it is more
important that these schemes are available for $D^h$ than $\zeta^h$,
so we choose to hold \eqref{complex 1} in the strong form. Later, in
setting up the primal-dual grid formulation, we shall introduce a dual
grid for which \eqref{complex 2} holds in the strong form,
consistently with the weak form on the primal grid.

\paragraph{Finite element differential form spaces} 
Having made the choice to hold \eqref{complex 1} in strong form, we
need to choose finite element spaces for $\zeta^h$, $\MM{u}^h$, and $D^h$,
denoted $V^0$, $V^1$ and $V^2$ respectively.  This choice defines equivalent
subspaces $\hat{\Lambda}^k\subset\Lambda^k$, $k=1,2,3$,
given by
\begin{eqnarray}
\hat{\Lambda}^0 &=& V^0, \\
\hat{\Lambda}^1 &=& \{ \star\tilde{u}^h: 
\MM{u}^h\in V^1\}, \\
\hat{\Lambda}^2 &=& \{ D^h\diff S: D^h \in V^2\},
\end{eqnarray}
We require that $\diff$ maps from $\hat{\Lambda}^1$ into
$\hat{\Lambda}^2$, and that $\diff$ maps from $\hat{\Lambda}^0$ into
$\hat{\Lambda}^1$ (in particular, onto the kernel of $\diff$ in
$\hat{\Lambda}^1$), which implies that $V^0$ is a continuous finite
element space, $V^1$ is div-conforming (\emph{i.e.} $\MM{u}^h\in V^1$
has continuous normal components across element boundaries), and $V^2$
is a discontinuous finite element space. This is expressed in
the following diagram,
\begin{equation}
 \begin{CD}
   \hat{\Lambda}^0 @> \diff >> \hat{\Lambda}^1 @>\diff >> \hat{\Lambda}^2 .
 \end{CD}
\end{equation}
Numerous examples of $(V^0,V^1,V^2)$ satisfying these properties
exist, for example $V^0=P(k+1)$ (degree $k$+$1$ polynomials in each
triangular element with $C^0$ continuity between elements),
$V^1=BDM(k)$ (degree $k$ vector polynomials with continuous normal
components across element edges, known as the $k$th
Brezzi-Douglas-Marini space), $V^2=P(k-1)_{DG}$ (degree $k-1$
polynomials with no inter-element continuity requirements). For more
details of the families of finite element spaces that satisfy these
conditions, see \cite{ArFaWi2006}.

In practice, to implement these schemes on a computer it is necessary
to expand functions in the finite element spaces in a basis, to obtain
discrete vector systems, but most techniques of proof avoid choosing a
particular basis since this usually obscures what is happening.

\paragraph{Discrete dual differential operator}
Whilst $\diff$ is identical to the operator used in the unapproximated
equations, we must approximate the dual operator $\delta$.  We define
$\delta^h$ as the discrete dual differential operator from
$\hat{\Lambda}^k$ to $\hat{\Lambda}^{k-1}$ that is dual to $\diff$,
\emph{i.e.}
\begin{equation}
\int_{\Omega} \gamma^h \wedge \star \delta^h \omega^h = 
\int_{\Omega}
\diff\gamma^h \wedge \star\omega^h, \quad \forall
\gamma^h\in\hat{\Lambda}^{k-1},\,\omega^h\in \hat{\Lambda}^k.
\end{equation}
Note that $\delta^h$ from $\hat{\Lambda}^k$ to $\hat{\Lambda}^{k-1}$ is
only an approximation to the dual differential operator defined
from $\Lambda^k$ to $\Lambda^{k-1}$, but that it still satisfies
$(\delta^h)^2=0$.

\paragraph{Discrete Helmholtz decomposition} As discussed in 
\cite{ArFaWi2006}, if $\diff$ maps from $\hat{\Lambda}^0$ onto 
the kernel of $\diff$ in $\hat{\Lambda}^1$, then there is a
discrete Helmholtz decomposition and any 1-form 
$\omega^h\in \hat{\Lambda}^1$ can be written
as
\begin{equation}
\omega^h = \diff \psi^h + \delta^h \phi^h
+ \mathfrak{h}^h,
\end{equation}
where $\psi^h\in\hat{\Lambda}^0$, $\phi^h\in\hat{\Lambda}^2$, and
$\mathfrak{h}^h \in \mathcal{H}$, where $\mathcal{H}\subset
\hat{\Lambda}^1$ is the space of discrete harmonic 1-forms
given by
\begin{equation}
\mathcal{H} = \{\mathfrak{h}^h\in \hat{\Lambda}^1:
\diff\mathfrak{h}^h=0, \, \delta^h\mathfrak{h}^h=0\},
\end{equation}
which has the same dimension as the space of continuous harmonic
1-forms on $\Omega$ (and which has dimension 0 for the surface of a
sphere).

\paragraph{Construction of global finite element spaces by pullback}

We construct the spaces $V^k$, $k=0,1,2$, and hence $\hat{\Lambda}^k,
k=0,1,2$, by dividing $\Omega$ (or a piecewise polynomial
approximation of $\Omega$) into elements, restricting $V^k$ to some
choice of polynomials on each element, and specifying the interelement
continuity (from the requirements of $\diff$ discussed above).  This
is most easily done by defining a reference element $\hat{e}$ where
integrals are computed, and a choice of polynomial function spaces
$\hat{\Lambda}^k(\hat{e})$, $k=0,1,2$ such that 
\begin{equation}
 \begin{CD}
   \hat{\Lambda}^0(\hat{e}) @> \diff >> \hat{\Lambda}^1(\hat{e}) @>\diff
   >> \hat{\Lambda}^2(\hat{e}).
 \end{CD}
\end{equation}
For each element $e$, we then define the element mapping
$\eta_e:\hat{\Lambda}^k(\hat{e})\to \hat{\Lambda}^k(e)$, where
$\hat{\Lambda}^k(e)$ is the space $\hat{\Lambda}^k$ restricted to
$e$. The mapping $\eta_e$ is a diffeomorphism, usually expanded in
polynomials. This then defines a mapping from $\Lambda^k(e)$ 
to $\Lambda^k(\hat{e})$ \emph{via} pullback
\begin{equation}
\omega^h \mapsto \eta^*_e\omega^h,
\end{equation}
where the pullback $\eta^*\omega$ of a $k$-form $\omega$ by a
diffeomorphism $\eta$ is defined by
\begin{equation}
\int_{M} \eta^*\omega = \int_{\eta(M)}\omega,
\end{equation}
for all integrable $k$-dimensional submanifolds $M$. The pullback
operator satisfies two useful properties:
\begin{enumerate}
\item
Pullback $\eta^*$ commutes with $\diff$:
$\diff \eta^*\omega=\eta^*\diff\omega$.
\item Pullback is compatible with the wedge
product:
$\eta^*(\alpha\wedge\omega)=(\eta^*\alpha)\wedge(\eta^*\omega)$.
\end{enumerate}

We define $\hat{\Lambda}^k(e)$ by
\begin{equation}
\hat{\Lambda}^k(e) = \left\{\omega^h\in \Lambda^k(e):\eta^*_e\omega^h \in \hat{\Lambda}^k(\hat{e})\right\}.
\end{equation}
In coordinates, the pullback $\eta^*_e\gamma^h\in
\hat{\Lambda}^0(\hat{e})$ of $\gamma^h \in \hat{\Lambda}^0(e)$ is
$\gamma^h\circ \eta_e^{-1}$. The pullback
$\eta^*_e\star\tilde{u}^h\in\hat{\Lambda}^1(\hat{e})$ of
$\star\tilde{u}^h\in \hat{\Lambda}^1(e)$ defines the (contravariant)
Piola transformation \cite{BrFo1991}
\begin{equation}
\eta_e^*\star\tilde{u}^h = \tilde{\hat{{u}}}^h \implies
u^h_i\circ \eta^{-1}_e =
\frac{1}{\det J_e}
\sum_j\left(J_e\right)_{ij}\hat{{u}}_j^h, \qquad J_e = \pp{\eta_e}{\hat{\MM{x}}},
\end{equation}
where $u^h_i$, $\hat{u}^h_i$ are the components of $\MM{u}^h$,
$\hat{\MM{u}}^h$ in our chosen coordinate systems, and $(J_e)_{ij}$
are the components of $J_e$. The pullback of
$D^h\diff{S}\in\hat{\Lambda}^2(e)$ defines the scaling transformation
\begin{equation}
\eta_e^*(D^h\diff{S}) = \hat{D}^h\diff\hat{S}
\implies D^h\circ \eta^{-1}_e = \hat{D}^h\left(\det J_e\right).
\end{equation}
Since pullback commutes with $\diff$, it is sufficient to check that
$\diff$ maps from $\hat{\Lambda}^k(\hat{e})$ to
$\hat{\Lambda}^{k+1}(\hat{e})$, $k=0,1$ to guarantee that it maps from
$\hat{\Lambda}^k$ to $\hat{\Lambda}^{k+1}$ (provided that the spaces
have sufficient interelement continuity that $\diff$ is defined). A
technicality for the $\Lambda^2$ case is that if $\eta_e$ is affine
(\emph{i.e.} the mesh triangles are flat), $\det J_e$ is constant, and
we obtain the same finite element space if we transform $D^h$ as a
0-form, \emph{i.e.} $D^h\circ \eta_e=\hat{D}^h$.  This simplifies some of
the expressions as will be discussed in the next section. In the
non-affine case, we may also transform $D^h$ as a 0-form, but this
requires further modifications to the framework \cite{BoRi2008}.

 For a description of an implementation of the Piola transformation
 and global assembly of $V^1$, see \cite{RoKiLo2009}, and for a
 description of an implementation on manifold meshes see
 \cite{Ro+2013}.

\subsection{Finite element discretisation: primal grid formulation}
\label{primal grid formulation}
In this section we provide a finite element (semi-discrete continuous
time) discretisation of the shallow water equations, and show that it
conserves mass, energy, potential enstrophy and potential vorticity. We then
show how to introduce dissipative stabilisations such that mass and
potential vorticity are still conserved.

Since $\hat{\Lambda}^2$ is a discontinuous space, we cannot apply
$\diff$ to $D^h$ and must instead adopt the weak form.  This is done by
taking the wedge product of Equation \eqref{u eqn} with a test 1-form
$\star\tilde{w}^h\in \hat{\Lambda}^1$, integrating over the domain
$\Omega$, integrating by parts (with vanishing boundary term since
there are no boundaries), and multiplying by -1:
 \begin{equation}
\dd{}{t}
\int_{\Omega}(\star\tilde{w}^h)\wedge\star(\star\tilde{u}^h)
+\int_{\Omega}(\star\tilde{w}^h)\wedge(\star\tilde{Q}^h)
 + \int_{\Omega}\diff(\star\tilde{w}^h)\wedge
\left(g(D^h+b^h)+K^h\right) = 0, \qquad \forall \star\tilde{w}^h 
\in \hat{\Lambda}^1.
\label{u cg}
\end{equation}
To obtain the Galerkin finite element approximation of this equation
we restrict $\star\tilde{u}^h$, $\star\tilde{w}^h$ to the finite
element space $\hat{\Lambda}^1$, and $D^h\diff{S}$ and $b\diff{S}$ to
the finite element space $\hat{\Lambda}^2$. Similarly, we write the
weak form of equation \eqref{D eqn} as
\begin{equation}
\label{D cg}
\dd{}{t}\int_{\Omega} \phi^h \wedge D^h\diff S +
\int_{\Omega} \phi^h \wedge  \diff (\star\tilde{F}^h)
= 0, \quad \forall \phi^h\diff S \in
\hat{\Lambda}^2,
\end{equation}
where $\star\tilde{F}^h\in \hat{\Lambda}^1$ is the mass flux, and the
Galerkin finite element approximation is obtained by restricting
$D^h\diff{S}$ and $\phi^h\diff{S}$ to the finite element space
$\hat{\Lambda}^2$.  To close the system, it remains to define the
vorticity flux $\star\tilde{Q}^h$ and the mass flux
$\star\tilde{F}^h$. Before we do that, we note the following property of
the discrete equations (\ref{u cg}-\ref{D cg}).
\begin{remark}[Topological terms]
  It is useful to note that apart from the $\diff/\diff t$ terms, all
  of the terms in Equations (\ref{u cg}-\ref{D cg}) are purely
  topological. To see this, taking the second term in Equation
  \eqref{u cg} as an example, we write the integral as a sum over
  elements,
  \begin{eqnarray}
    \int_{\Omega} (\star\tilde{w}^h)\wedge(\star\tilde{Q}^h)
    &=& \sum_e \int_e (\star{\tilde{w}^h})\wedge(\star{\tilde{Q}^h}), \\
    &=& \sum_e \int_{\hat{e}}g^*_e
\left((\star{\tilde{w}^h})\wedge(\star{\tilde{Q}^h})\right), \\
    &=& \sum_e \int_{\hat{e}}
g^*_e(\star{\tilde{w}^h})\wedge g^*_e(\star{\tilde{Q}^h}), \\
 &=& \sum_e \int_{\hat{e}} (\star\hat{\tilde{w}}^h)\wedge(\star\hat{\tilde{Q}}^h).
  \end{eqnarray}
  Since we use $\hat{\tilde{w}}^h$ and $\hat{\tilde{Q}}^h$ as our
  computational variables, this expression has no factors of $J_e$,
  and hence the integral over each element is independent of the
  element coordinates: the global integral only depends on the mesh
  topology. Similarly, for an integral of the form (which corresponds
  to the form of the pressure gradient, as well as the mass flux term
  upon exchange of the trial and test functions)
  \begin{eqnarray}
    \int_{\Omega} \diff(\star\tilde{w}^h)\wedge \phi^h & = &
    \sum_e \int_{\hat{e}}g^*_e\left(
\diff(\star\tilde{w}^h)\wedge \phi^h\right),  \\
&=&     \sum_e \int_{\hat{e}}g^*_e(\diff\star\tilde{w}^h)\wedge g^*_e\phi^h,\\
&=&     \sum_e \int_{\hat{e}}\diff g^*_e(\star\tilde{w}^h)\wedge g^*_e\phi^h,\\
&=&     \sum_e \int_{\hat{e}}\diff(\star\hat{\tilde{w}}^h)\wedge \hat{\phi^h},
  \end{eqnarray}
  where we have made use of $\diff$ commuting with pullback in the
  last line to obtain an expression that is independent of $J_e$.

  The $\diff/\diff t$ terms are not purely topological since they
  involve an extra $\star$ in Equation \eqref{u cg} and a factor of 
  $\diff S$ in Equation \eqref{D cg}, which means that metric terms 
  are present.
\end{remark}
These purely topological terms lead to efficiencies since they do not
require inversion of $J_e$ (the main contribution to flops in
\emph{e.g.} the assembly of the standard weak Laplacian using
continuous finite elements)\footnote{Note that inversion of $J_e$ is
  not needed for the time derivative terms either, so the entire
  formulation can be implemented without $J_e$ inversions.},
furthermore the contributions to the integral from element $e$ can be
calculated without even needing to load in the coordinate field (which
is an important consideration when the cost of transferring data to
processors dominates the cost of performing flops).

These topological relations lead to a number of properties of the
equations, which we shall now discuss, starting with conservation of
mass. \begin{theorem}[Mass conservation]
Let $D^h\diff S$ satisfy equation \eqref{D cg}. Then $D^h\diff S$ is 
locally conserved. 
\end{theorem}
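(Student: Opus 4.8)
The plan is to exploit the fact that $V^2$ is a discontinuous finite element space, so that the indicator (characteristic) function $\chi_e$ of a single element $e$ lies in $V^2$: since $V^2=P(k-1)_{DG}$ contains the piecewise constants, $\chi_e\diff S\in\hat{\Lambda}^2$ is an admissible test object. First I would substitute $\phi^h=\chi_e$ into \eqnref{D cg}. Because $\chi_e$ vanishes outside $e$ and equals $1$ on $e$, the first term collapses to $\dd{}{t}\int_e D^h\diff{S}$, the mass contained in the element, while the second term reduces to $\int_e\diff(\star\tilde{F}^h)$, the wedge of a $0$-form with a $2$-form being ordinary scalar multiplication.

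Second, I would apply Stokes' theorem to this second term on the single element $e$, giving $\int_e\diff(\star\tilde{F}^h)=\int_{\partial e}\star\tilde{F}^h$. Combining, \eqnref{D cg} yields the elementwise mass budget
\begin{equation}
\dd{}{t}\int_e D^h\diff{S}=-\int_{\partial e}\star\tilde{F}^h,
\end{equation}
which states that the rate of change of mass in $e$ equals minus the net outward flux of the mass flux $\star\tilde{F}^h$ through $\partial e$. This is exactly the statement of local conservation.

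Third, I would observe that this budget is consistent between neighbouring elements: since $\star\tilde{F}^h\in\hat{\Lambda}^1$ and $V^1$ is div-conforming, the normal component of the flux is single-valued across any shared edge, so the outward flux from $e$ through a given edge equals the inward flux into its neighbour. Summing the elementwise budgets over any union of elements therefore telescopes, the interior edge contributions cancelling, and (because $\Omega$ has no boundary) global conservation of $\int_\Omega D^h\diff{S}$ follows as a corollary.

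The proof itself is short; the point requiring care is the admissibility of the test function. The argument hinges on $\hat{\Lambda}^2$ being discontinuous so that $\chi_e\in V^2$ --- precisely the design choice, made earlier, to hold \eqnref{complex 1} in strong form. One should also verify that $\star\tilde{F}^h$ has a well-defined normal trace on $\partial e$ (guaranteed by div-conformity of $\hat{\Lambda}^1$), which is what makes the boundary flux integral meaningful and the local budgets mutually consistent. I expect this div-conformity, rather than any property of the particular definition of $\star\tilde{F}^h$, to be the only structural ingredient needed, so the result holds for any admissible choice of mass flux.
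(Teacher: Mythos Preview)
Your proposal is correct and follows essentially the same route as the paper: choose the indicator function $\chi_e$ as test function in \eqref{D cg}, apply Stokes' theorem on the element to obtain the budget $\dd{}{t}\int_e D^h\diff{S}=-\int_{\partial e}\star\tilde{F}^h$, and invoke the continuity of the normal component of $\star\tilde{F}^h\in\hat{\Lambda}^1$ across edges to conclude local conservation. You are slightly more explicit than the paper in justifying the admissibility of $\chi_e$ (via the discontinuity of $V^2$) and the well-posedness of the edge flux (via div-conformity of $V^1$), but the argument is the same.
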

\begin{proof}
Let $\phi^h$ be the indicator function for element $e$,
\emph{i.e.},
\begin{equation}
\phi^h(\MM{x}) = \left\{
\begin{array}{r l}
1 & \mbox{if }\MM{x}\in e, \\
0 & \mbox{otherwise},
\end{array}
\right.
\end{equation}
then equation \eqref{D cg} becomes
\begin{equation}
\underbrace{\dd{}{t}\int_e D^h\diff{S}}_{\textrm{change in mass in }e}
= -\int_e \diff(\star\tilde{F}^h) = 
 \underbrace{-\int_{\partial e} \tilde{F}^h.}_{\textrm{mass flux through }\partial e}
\end{equation}
Since $\star\tilde{F}^h\in \hat{\Lambda}^1$, the integral
of $\star\tilde{F}^h$ takes the same value on either side
of each of the element edges forming $\partial e$ (except with
alternate sign) and hence the flux of $D^h\diff{S}$ out of element $e$
is the same as the flux into the neighbouring elements, and 
$D^h$ is locally conserved.
\end{proof}

The vorticity is obtained from equation \eqref{zeta eqn}. If
$\star\tilde{u}^h\in \hat{\Lambda}^1$ then $\diff \tilde{u}^h$ is not
defined, so we obtain an approximation to $\zeta^h$ in
$\hat{\Lambda}^0$ by introducing a test function $\gamma^h
\in\hat{\Lambda}^0$ and integrating by parts (neglecting the surface
term as $\Omega$ is closed),
\begin{equation}
\int_{\Omega} \gamma^h \wedge \star \zeta^h =
-\int_{\Omega} \diff \gamma^h \wedge \tilde{u}^h, \qquad
\forall \gamma^h \in \hat{\Lambda}^0.
\label{zeta cg}
\end{equation}
\begin{theorem}[Discrete vorticity conservation]
Let $\star\tilde{u}^h$ satisfy equation \eqref{u cg}. Then $\zeta^h
\in \hat{\Lambda}^0$ obtained from equation \eqref{zeta cg} satisfies
\begin{equation}
\dd{}{t}\int_{\Omega}\gamma^h\wedge\zeta^h\diff S
-\int_{\Omega}\diff\gamma^h\wedge\star\tilde{Q}^h=0, 
 \qquad \forall \gamma^h \in \hat{\Lambda}^0,
\end{equation}
which is the continuous finite element approximation to the vorticity
equation in flux form. Furthermore, $\zeta^h$ is globally conserved.
\end{theorem}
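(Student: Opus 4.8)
The plan is to mimic, at the discrete level, the continuous derivation of the vorticity equation, in which one applies $\diff$ to the momentum equation \eqref{u eqn} and uses $\diff^2=0$ to annihilate the pressure gradient. The discrete analogue of ``applying $\diff$'' is to test the weak momentum equation \eqref{u cg} against the special test function $\star\tilde{w}^h=\diff\gamma^h$ for an arbitrary $\gamma^h\in\hat{\Lambda}^0$. This substitution is legitimate precisely because the finite element spaces were designed so that $\diff$ maps $\hat{\Lambda}^0$ into $\hat{\Lambda}^1$, so that $\diff\gamma^h$ is an admissible member of $\hat{\Lambda}^1$; recognising that this choice is both legal and is the correct discrete surrogate for taking the curl is the conceptual crux of the argument.

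With this choice, the pressure/kinetic-energy term in \eqref{u cg} becomes $\int_\Omega\diff(\diff\gamma^h)\wedge(g(D^h+b^h)+K^h)$, which vanishes identically by the closure property $\diff^2=0$ --- exactly as the pressure gradient drops out of the continuous vorticity budget. Next I would simplify the time-derivative term using $\star\star=-\Id$ on $1$-forms, so that $\star(\star\tilde{u}^h)=-\tilde{u}^h$ and the first term of \eqref{u cg} collapses to $-\dd{}{t}\int_\Omega\diff\gamma^h\wedge\tilde{u}^h$. At this stage \eqref{u cg} has been reduced to a relation between $\dd{}{t}\int_\Omega\diff\gamma^h\wedge\tilde{u}^h$ and the vorticity flux $\int_\Omega\diff\gamma^h\wedge\star\tilde{Q}^h$.

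To finish, I would differentiate the defining relation \eqref{zeta cg} in time. Because $\gamma^h$ is a fixed (time-independent) test function, the differentiation passes through the integral and yields $\dd{}{t}\int_\Omega\gamma^h\wedge\star\zeta^h=-\dd{}{t}\int_\Omega\diff\gamma^h\wedge\tilde{u}^h$, and since $\star\zeta^h=\zeta^h\diff S$ the left-hand side is exactly $\dd{}{t}\int_\Omega\gamma^h\wedge\zeta^h\diff S$. Substituting the reduced form of \eqref{u cg} then gives the claimed weak vorticity equation, which is the continuous-Galerkin approximation of the flux-form identity $\pp{}{t}\zeta\diff S+\diff\star\tilde{Q}=0$ derived in the previous subsection. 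The only real care needed in this step is consistent sign bookkeeping through the two integrations by parts and the $\star\star=-\Id$ identity.

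Finally, global conservation follows immediately by specialising the weak statement. Since $V^0=\hat{\Lambda}^0$ is a continuous finite element space it contains the constants, so I may take $\gamma^h\equiv 1$. Then $\diff\gamma^h=0$, the flux term $\int_\Omega\diff\gamma^h\wedge\star\tilde{Q}^h$ vanishes, and what remains is $\dd{}{t}\int_\Omega\zeta^h\diff S=0$, i.e. the total vorticity is conserved. The main obstacle is not any delicate estimate but the structural observation in the first step --- that $\diff\gamma^h$ is a valid test $1$-form, so that testing against it reproduces the continuous ``apply $\diff$'' computation exactly --- after which everything is a direct consequence of $\diff^2=0$ and $\star\star=-\Id$.
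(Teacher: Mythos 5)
Your proposal is correct and takes essentially the same route as the paper's proof: test \eqref{u cg} with $\pm\diff\gamma^h$ (the paper chooses $\star\tilde{w}^h=-\diff\gamma^h$, you choose $+\diff\gamma^h$; the difference is immaterial since the relation is linear in the test function), annihilate the Bernoulli term with $\diff^2=0$, identify the time-derivative term via the time-differentiated \eqref{zeta cg}, and set $\gamma^h=1$ for global conservation.

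One bookkeeping caveat is worth recording. If you apply $\star\star=-\Id$ strictly to \eqref{u cg} exactly as printed, your reduction actually yields $\dd{}{t}\int_{\Omega}\gamma^h\wedge\zeta^h\diff S+\int_{\Omega}\diff\gamma^h\wedge\star\tilde{Q}^h=0$, i.e.\ the flux term carries the sign opposite to the one in the theorem statement; the paper's own proof reaches the stated minus sign only by implicitly treating $\star(\star\tilde{u}^h)$ as $+\tilde{u}^h$ in the time-derivative term. This mismatch traces to a sign inconsistency internal to the paper (the relative signs of the terms in \eqref{u cg} versus its stated Hodge-star conventions), not to any structural flaw in your argument, which otherwise matches the paper's proof step for step.
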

\begin{proof}
Since $-\diff\gamma^h\in \hat{\Lambda}^1$ for arbitrary $\gamma^h\in
\hat{\Lambda}^0$, we may select $\star\tilde{w}^h=-\diff\gamma^h$
in equation \eqref{u cg} to obtain 
\begin{eqnarray}
\dd{}{t} \int_{\Omega}\gamma^h\wedge \zeta^h \diff S & =& 
\dd{}{t}
\int_{\Omega}\gamma^h\wedge\star \zeta^h \\
& = & 
\dd{}{t}
\int_{\Omega}-\diff\gamma^h\wedge\tilde{u}^h 
\\
& = & 
\int_{\Omega}\diff\gamma^h\wedge\star\tilde{Q}^h
 +\int_{\Omega}\underbrace{\diff^2\gamma^h}_{=0}\wedge\left(g(D^h-b)+K^h\right)\diff{S} , \\
& = & 
\int_{\Omega}\diff\gamma^h\wedge\star\tilde{Q}^h,
 \qquad \forall \gamma^h \in \hat{\Lambda}^0.
\end{eqnarray}
This is the standard continuous finite element discretisation of the
vorticity transport equation. Global conservation of vorticity is a
direct consequence of this, upon choosing $\gamma^h=1$:
\begin{eqnarray}
\dd{}{t}
\int_{\Omega}\zeta^h\diff{S} =
-\int_{\Omega}\underbrace{\diff(1)}_{=0}\wedge\star\tilde{Q}^h=0.
\end{eqnarray}
\end{proof}
Having defined a vorticity, we can define a potential vorticity
$q^h\in \hat{\Lambda}^0$ from
 \begin{equation}
\int_{\Omega} \gamma^h \wedge q^hD^h\diff{S} = \int_{\Omega}\gamma^h \wedge
(\zeta^h+f)\diff{S}.
\label{q cg}
\end{equation}
Then, similar and straightforward calculations lead to the following.
\begin{theorem}[Potential vorticity conservation]
Let $\star\tilde{u}^h$ satisfy equation \eqref{u cg} and
$D^h\diff S$ satisfy equation \eqref{D cg}. Then $q^h \in
\hat{\Lambda}^0$ obtained from equation \eqref{q cg} satisfies
\begin{equation}
\label{pv cg}
\dd{}{t}\int_{\Omega} \gamma^h \wedge q^hD^h\diff{S} 
+\int_{\Omega} \diff \gamma^h \wedge \star\tilde{Q}^h=0.
\end{equation}
This is the standard continuous finite element approximation to the
potential vorticity equation in conservation form.  Furthermore, $q$
is globally conserved \emph{i.e.}
\begin{equation}
\dd{}{t}\int_{\Omega} q^hD^h\diff S = 0.
\end{equation}
\end{theorem}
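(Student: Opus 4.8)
The plan is to derive \eqref{pv cg} by reducing it to the discrete vorticity conservation theorem already proved, using the defining relation \eqref{q cg} as the only new ingredient. The essential observation is that \eqref{q cg} holds for every fixed test function $\gamma^h\in\hat{\Lambda}^0$ at each instant, and that both $\gamma^h$ and the Coriolis parameter $f$ are independent of time. Differentiating \eqref{q cg} in $t$ and discarding the $f$ contribution (which vanishes since $\gamma^h$ and $f$ are both time-independent) therefore gives
\begin{equation}
\dd{}{t}\int_{\Omega}\gamma^h\wedge q^hD^h\diff S
= \dd{}{t}\int_{\Omega}\gamma^h\wedge\zeta^h\diff S,
\end{equation}
so that all of the time dependence of the potential-vorticity density is carried by the relative vorticity.

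First I would invoke the discrete vorticity conservation theorem, which already expresses $\dd{}{t}\int_{\Omega}\gamma^h\wedge\zeta^h\diff S$ in terms of the vorticity flux $\star\tilde{Q}^h$; substituting that identity into the display above produces \eqref{pv cg} immediately. No enlargement of the test space is required, since $\gamma^h\in\hat{\Lambda}^0$ is exactly the class of test functions for which both \eqref{q cg} and the vorticity identity are posed, and $-\diff\gamma^h\in\hat{\Lambda}^1$ remains an admissible velocity test $1$-form, which is precisely what licenses the substitution $\star\tilde{w}^h=-\diff\gamma^h$ underlying the vorticity theorem. Concretely, \eqref{pv cg} is nothing more than the vorticity transport identity with $\zeta^h\diff S$ replaced by $q^hD^h\diff S$, so the sign of the flux term is inherited directly from that theorem.

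For the global statement I would specialise to $\gamma^h=1$, which is admissible because $\hat{\Lambda}^0=V^0$ is a continuous finite element space and hence contains the constants. Then $\diff\gamma^h=\diff(1)=0$ annihilates the flux term in \eqref{pv cg} and leaves
\begin{equation}
\dd{}{t}\int_{\Omega}q^hD^h\diff S = 0.
\end{equation}
Equivalently, setting $\gamma^h=1$ in \eqref{q cg} gives $\int_{\Omega}q^hD^h\diff S=\int_{\Omega}(\zeta^h+f)\diff S$, whose invariance follows from the already-established global conservation of $\zeta^h$ together with the time-independence of $f$; this provides a useful cross-check on the first route.

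I do not anticipate a genuine obstacle: the argument is structurally identical to the vorticity conservation proof, and its content is entirely contained in the consistency relation \eqref{q cg}. The only points demanding care are the bookkeeping of signs in the wedge products (which originate from $\star\star=-\Id$ on $1$-forms and must be made to agree with the vorticity theorem) and the verification that the constant function lies in $\hat{\Lambda}^0$, so that the global conservation law is a legitimate special case of \eqref{pv cg}.
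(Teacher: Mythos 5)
Your strategy --- differentiate \eqref{q cg} in time, discard the contributions of the static $f$ and $\gamma^h$, substitute the already-proven discrete vorticity identity, and finally take $\gamma^h=1$ (admissible since $\hat{\Lambda}^0=V^0$ contains the constants) to get global conservation --- is exactly the ``similar and straightforward calculation'' that the paper invokes in place of a written proof, and the global-conservation part of your argument is sound, because the flux term is annihilated by $\diff(1)=0$ whatever its sign.

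However, the one step you assert without verifying is precisely where the difficulty sits: you claim the sign of the flux term in \eqref{pv cg} is ``inherited directly'' from the vorticity theorem. The vorticity theorem as printed reads
\[
\dd{}{t}\int_{\Omega}\gamma^h\wedge\zeta^h\diff S-\int_{\Omega}\diff\gamma^h\wedge\star\tilde{Q}^h=0,
\]
with a minus sign, whereas \eqref{pv cg} carries a plus sign. Your reduction, executed faithfully, therefore yields
\[
\dd{}{t}\int_{\Omega}\gamma^h\wedge q^hD^h\diff S-\int_{\Omega}\diff\gamma^h\wedge\star\tilde{Q}^h=0,
\]
which is not \eqref{pv cg} as stated. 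In fact no argument can reproduce both printed statements simultaneously: subtracting one from the other and using the time derivative of \eqref{q cg} (with $f$ time-independent) forces $\int_{\Omega}\diff\gamma^h\wedge\star\tilde{Q}^h=0$ for every $\gamma^h$, which is false in general. The root cause is a sign inconsistency in the paper itself. If one substitutes $\star\tilde{w}^h=-\diff\gamma^h$ into \eqref{u cg} and uses \eqref{zeta cg} together with $\star\star=-\Id$ on $1$-forms, the time-derivative term of \eqref{u cg} equals $-\dd{}{t}\int_{\Omega}\gamma^h\wedge\zeta^h\diff S$ (the paper's vorticity proof drops this minus sign), and one then obtains the plus sign of \eqref{pv cg} for both the vorticity and the potential vorticity equations. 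So your plan is the right one, but to make it a proof you must either carry out this direct substitution (recovering the $+$ sign and noting that the stated vorticity theorem contains the sign slip), or, if you take the vorticity theorem at face value, accept that you have proved \eqref{pv cg} with the opposite sign. Remarking that the signs ``must be made to agree with the vorticity theorem'' without doing that bookkeeping leaves the proof incomplete at exactly the subtle point.
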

Now we return to the question of how to choose the mass and vorticity
fluxes. The following choices 
lead to energy and potential enstrophy conservation.
\begin{eqnarray}
\nonumber
\star\tilde{F}^h \in \hat{\Lambda}^1 \textrm{ with }
\int_{\Omega} (\star\tilde{w}^h) \wedge \star
(\star\tilde{F}^h) & = & \int_{\Omega} (\star\tilde{w}^h)
\wedge \star \left(D^h(\star\tilde{u}^h)\right), \\
& & \qquad \qquad \label{F eqn}
\quad \forall \star\tilde{w}^h \in \hat{\Lambda}^1, \\
\label{Q eqn}
\star\tilde{Q}^h & = & q^h(\star\tilde{F}^h).
\end{eqnarray}
Note that obtaining the mass flux from equation \eqref{F eqn} involves
solving a global, but well-conditioned matrix-vector system for the
basis coefficients of $\tilde{F}^h$.  These choices have been informed by
energy-enstrophy conserving C-grid finite difference methods designed
on latitude-longitude grids in \cite{ArLa1981} that were extended to
either energy or enstrophy conserving C-grid schemes on arbitrary unstructured
C-grids in \cite{RiThKlSk2010}.
\begin{theorem}[Energy conservation]
  Let $\tilde{u}^h$ satisfy equation \eqref{u cg} and $D^h\diff S$ satisfy
  equation \eqref{D cg}. Furthermore assume that $\star\tilde{F}^h$ and
  $\star\tilde{Q}^h$ are defined from \eqref{F eqn}
  and \eqref{Q eqn}.  Then the energy, defined by
\begin{equation}
E = \int_{\Omega} \frac{D^h}{2}\star\tilde{u}^h
\wedge \star (\star\tilde{u}^h) + g\left(
\frac{1}{2}(D^h)^2-b^hD^h\right)\diff S,
\end{equation}
and the potential enstrophy, defined by
\begin{equation}
Z = \int_{\Omega} (q^h)^2D^h\diff S,
\end{equation}
are both conserved. More generally, the energy is conserved for any
$\star\tilde{Q}^h$ satisfying $\star\tilde{Q}^h=\star\tilde{F}^h(q^h)'$ for some
scalar function $(q^h)'$.
\end{theorem}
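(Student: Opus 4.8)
The plan is to prove both conservation laws by testing the discrete equations against carefully chosen functions, relying on three structural facts that survive the discretisation: a $1$-form wedged with itself vanishes, the pairing $\alpha\wedge\star\beta$ is symmetric, and $\diff$ satisfies $\diff^2=0$ together with integration by parts with no boundary terms.

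For energy, I would differentiate $E$ in time and split it into the kinetic part $\int_\Omega\tfrac{D^h}{2}\,\star\tilde{u}^h\wedge\star(\star\tilde{u}^h)$ and the potential part. The kinetic derivative yields a term carrying $\partial_t D^h$ and a term carrying $\partial_t(\star\tilde{u}^h)$. Since $\partial_t(\star\tilde{u}^h)\in\hat{\Lambda}^1$ is an admissible test function, I would insert it into the flux relation \eqref{F eqn} and use symmetry of the pairing to identify the second term with the time-derivative term of \eqref{u cg} at $\star\tilde{w}^h=\star\tilde{F}^h$. Substituting $\star\tilde{w}^h=\star\tilde{F}^h$ into \eqref{u cg}, the vorticity-flux term becomes $\int_\Omega q^h(\star\tilde{F}^h)\wedge(\star\tilde{F}^h)$ by \eqref{Q eqn}, which vanishes because a $1$-form wedged with itself is zero --- the exterior-calculus embodiment of $\MM{u}\cdot\MM{u}^\perp=0$ --- leaving only the Bernoulli term $\int_\Omega\diff(\star\tilde{F}^h)\wedge\big(g(D^h+b^h)+K^h\big)$. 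I would then collect the remaining contributions, namely the $\partial_t D^h$ piece of the kinetic energy and the whole potential-energy derivative, and recognise them as \eqref{D cg} tested against the discrete Bernoulli function $\Psi^h=g(D^h+b^h)+K^h$; adding the two identities cancels the Bernoulli terms and gives $\dd{E}{t}=0$. The generalisation is immediate: the vorticity-flux term was discarded using only $\alpha\wedge\alpha=0$, so any $\star\tilde{Q}^h=(q^h)'\,\star\tilde{F}^h$ produces $\int_\Omega(q^h)'(\star\tilde{F}^h)\wedge(\star\tilde{F}^h)=0$ and leaves the argument untouched.

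For potential enstrophy I would differentiate $Z$ using $\partial_t\big((q^h)^2D^h\big)=2q^h\,\partial_t(q^hD^h)-(q^h)^2\,\partial_t D^h$. The first piece I would treat with the potential-vorticity relation \eqref{pv cg} at $\gamma^h=q^h\in\hat{\Lambda}^0$, together with \eqref{Q eqn} and $q^h\diff q^h=\tfrac12\diff\big((q^h)^2\big)$, rewriting it as $\tfrac12\int_\Omega\diff\big((q^h)^2\big)\wedge\star\tilde{F}^h$. The second piece I would treat with \eqref{D cg} followed by integration by parts; this is clean because $q^h$, and hence $(q^h)^2$, is continuous (it lives in the conforming space $V^0$) while $\star\tilde{F}^h$ has continuous normal trace, so the inter-element boundary terms cancel and one obtains the same expression with the opposite sign. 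The two pieces cancel and $\dd{Z}{t}=0$.

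The main obstacle, shared by both proofs, is that the conserved densities are quadratic in the prognostic fields and so leave the finite element spaces: $K^h\notin V^2$ and $(q^h)^2\notin V^2$, so neither $\Psi^h$ nor $(q^h)^2$ is literally an admissible test function in \eqref{D cg}. The resolution is an $L^2(\diff S)$-orthogonality argument. In \eqref{u cg} the term $K^h$ is wedged only against $\diff(\star\tilde{w}^h)\in\hat{\Lambda}^2$, so it enters solely through its projection $\Pi K^h\in V^2$; and in the energy derivative $K^h$ multiplies $\partial_t D^h\in V^2$, which is orthogonal to $K^h-\Pi K^h$. Hence the scheme and the energy functional consistently see only $\Pi K^h$, and $\Pi K^h+g(D^h+b^h)\in V^2$ may legitimately be used as the test function; the identical device, with $\diff(\star\tilde{F}^h)\in\hat{\Lambda}^2$ absorbing the projection, licenses the enstrophy step. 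Checking this consistency carefully, rather than the cancellations themselves (which are forced by the structural identities), is where the real work lies.
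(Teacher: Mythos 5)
Your proposal follows the paper's argument essentially step for step: the same test functions ($\star\tilde{u}^h_t$ in \eqref{F eqn}, $\star\tilde{F}^h$ in \eqref{u cg}, the projected Bernoulli function in \eqref{D cg}), the same wedge-antisymmetry cancellation of the vorticity-flux term (which likewise gives the generalisation to $\star\tilde{Q}^h=(q^h)'\star\tilde{F}^h$), and the same enstrophy decomposition treated via \eqref{pv cg} at $\gamma^h=q^h$; your $L^2(\diff S)$-orthogonality device is precisely the paper's projection $\Pi_2$. The only cosmetic difference is in the $(q^h)^2\partial_t D^h$ piece, where the paper notes that \eqref{D cg} holds pointwise --- both $D^h_t\diff S$ and $\diff(\star\tilde{F}^h)$ lie in $\hat{\Lambda}^2$, giving \eqref{D pointwise} --- which sidesteps the projection and inter-element trace bookkeeping you invoke there.
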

\begin{proof}
The energy equation is
\begin{eqnarray}
\dot{E}  &=&  \int_{\Omega} D^h(\star\tilde{u}^h)\wedge \star
(\star\tilde{u}^h_t)\diff S + \left(K^h+g(D^h+b^h)\right)\diff S
\wedge\star D^h_t \diff S, \\
&=&  \int_{\Omega} \star\tilde{F}^h \wedge \star
(\star\tilde{u}^h_t) + \star\Pi_2\left(\left(K^h
+ g(D^h+b^h)\right)\diff S\right)\wedge \star D^h_t \diff S,
\end{eqnarray}
where in the second line, we have made use of the definition of
$\star\tilde{F}^h$ taking $\tilde{w}^h=\tilde{u}^h_t$, and we define $\Pi_2$
as the $L_2$ projection into $\hat{\Lambda}_2$, \emph{i.e.}
\begin{equation}
\int_{\Omega} \phi^h\diff S\wedge \star \Pi_2(p\diff S)
= \int_{\Omega}\phi^h\diff S\wedge \star p\diff S,
\quad \forall \phi^h\diff S\in \hat{\Lambda}_2.
\end{equation}
We proceed by substituting equations (\ref{u cg}-\ref{D cg}), with
$\star\tilde{w}^h=\star\tilde{F}^h$ and $\phi^h\diff
S=\Pi_2(K^h + g(D^h+b^h))\diff S)$,
to obtain
\begin{eqnarray}
\nonumber \dot{E} &=&  
-\int_{\Omega} \star\tilde{F}^h \wedge \star\tilde{Q}^h  +
\int_{\Omega} \diff\left(\star\tilde{F}^h\right)
\wedge\star \Pi_2\left((K^h+g(D^h+b^h))\diff S\right) \\
& & \qquad 
 - \int_{\Omega} \diff\left(\star\tilde{F}^h\right)\wedge \star
\Pi_2\left(\left(K^h + g(D^h+b^h)\right)\diff S\right) \\
& = & -\int_{\Omega} q^h\underbrace{\star\tilde{F}^h \wedge \star\tilde{F}^h}_{=0}
=0,
\end{eqnarray}
where in the last line we have made use of the antisymmetry of the 
wedge product.

To show potential enstrophy conservation, we first note that since 
$D^h_t\diff S$ and $\diff(\star\tilde{F}^h)$ are both in
$\hat{\Lambda}^2$, the $L_2$ projection in equation \eqref{D eqn}
is trivial and we obtain
\begin{equation}
\label{D pointwise}
D^h_t\diff S + \diff(\star\tilde{F}^h) = 0,
\end{equation}
pointwise. Now we calculate the enstrophy equation,
\begin{eqnarray}
\dot{Z} & = & \int_{\Omega} q^h\wedge (q^hD^h)_t\diff S + \int_{\Omega}
q^h_t\wedge q^hD^h\diff S,\\ & = & \int_{\Omega} 2q^h\wedge(q^hD^h)_t \diff S -
\int_{\Omega} (q^h)^2\wedge D^h_t \diff S, \\ & = & -\int_{\Omega}
2q^h\wedge\diff(q^h\star\tilde{F}^h) + \int_{\Omega} (q^h)^2 \wedge
\diff(\star\tilde{F}^h), \\ & = & -\int_{\Omega}
(q^h)^2\wedge\diff(\star\tilde{F}^h) - \int_{\Omega} \underbrace{2
  q^h\diff q^h}_{=\diff (q^h)^2} \wedge \star\tilde{F}^h, \\ & = &
-\int_{\Omega} \diff\left((q^h)^2\wedge \star\tilde{F}^h\right) = 0,
\end{eqnarray}
where we have made use of equation \eqref{q cg} using 
$\gamma^h=q^h$, together with 
equation \eqref{D pointwise}, in the third line, and the product
rule and Stokes' theorem (with $\Omega$ closed) in the last line.
\end{proof}
The following property is important for preserving qualitative
properties of $q^h$, since it mimics the Lagrangian conservation and
reduces the types of oscillations that can occur in the solution.
\begin{theorem}[Mass consistent potential vorticity advection]
Let $\tilde{u}^h$ satisfy equation \eqref{u cg} and $D^h\diff S$
satisfy equation \eqref{D cg}, and $\star\tilde{Q}^h$ is defined from
\eqref{Q eqn}, with any choice of $\star\tilde{F}^h$, and suppose that
$D^h>0$ everywhere for all time.

If $q^h$ is initially constant, it will remain constant for all
time.
\end{theorem}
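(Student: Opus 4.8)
The plan is to reduce the statement to the advective (material) form of the discrete potential-vorticity equation and then argue by uniqueness. First I would combine the potential vorticity equation \eqref{pv cg} with the flux choice \eqref{Q eqn}, writing $\star\tilde{Q}^h=q^h(\star\tilde{F}^h)$, and expand the time derivative as $\dd{}{t}(q^hD^h\diff S)=q^h_t\,D^h\diff S+q^h\,D^h_t\diff S$, where $q^h_t\in\hat{\Lambda}^0$ since $\hat{\Lambda}^0$ is a fixed finite-dimensional space. The crucial input is that the mass equation holds pointwise, $D^h_t\diff S+\diff(\star\tilde{F}^h)=0$, exactly as in \eqref{D pointwise}: both terms lie in $\hat{\Lambda}^2$, so the $L_2$ projection in \eqref{D cg} is trivial. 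Substituting this for $D^h_t\diff S$ leaves an expression involving only $q^h\diff(\star\tilde{F}^h)$ and $\diff\gamma^h\wedge q^h\star\tilde{F}^h$ besides the $q^h_t$ term.

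The next step is to collapse these remaining non-advective terms. Applying Stokes' theorem to $\diff(\gamma^h q^h\,\star\tilde{F}^h)$ on the closed surface $\Omega$ (so the integral of the exact $2$-form vanishes) together with the product rule $\diff(\gamma^h q^h)=q^h\diff\gamma^h+\gamma^h\diff q^h$, the contributions proportional to $q^h\,\diff\gamma^h\wedge\star\tilde{F}^h$ cancel, and one is left with the advective form
\begin{equation}
\int_\Omega \gamma^h\,q^h_t\,D^h\diff S+\int_\Omega \gamma^h\,\diff q^h\wedge\star\tilde{F}^h=0,\qquad\forall\gamma^h\in\hat{\Lambda}^0.
\end{equation}
This identity holds for arbitrary $q^h$ and is the discrete analogue of $D(\partial_t q+\mathbf{u}\cdot\nabla q)=0$; note in particular that it requires no assumption on $\star\tilde{F}^h$, matching the hypothesis of the theorem.

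I would then specialise to constant $q^h$. Since $\diff q^h=0$ for a spatially constant field, the advection term drops out and the identity reduces to $\int_\Omega\gamma^h q^h_t\,D^h\diff S=0$ for all $\gamma^h$. Choosing $\gamma^h=q^h_t$ and using $D^h>0$ gives $\int_\Omega (q^h_t)^2 D^h\diff S=0$, whence $q^h_t\equiv0$; this is precisely where the positivity hypothesis on $D^h$ enters, making the $D^h$-weighted inner product positive definite so that the vanishing integral forces the integrand to vanish.

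Finally, to upgrade this instantaneous statement to ``for all time'', I would read the advective form as a linear ODE $M(t)\dot{\mathbf{q}}=-A(t)\mathbf{q}$ for the basis coefficients $\mathbf{q}$ of $q^h$, with $M_{ij}=\int_\Omega\psi_i\psi_j D^h\diff S$ symmetric positive definite because $D^h>0$. The constant field is a stationary solution (its coefficients give $\diff q^h=0$, hence $A(t)\mathbf{q}=0$, while $\dot{\mathbf{q}}=0$), and the diagnostic $q^h$ obtained from \eqref{q cg} solves the same ODE with the same initial data; invertibility of $M(t)$ makes the system well posed, so uniqueness forces $q^h(t)$ to remain the initial constant. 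The main obstacle I anticipate is the bookkeeping in the second step — checking that every non-advective contribution really cancels, which hinges on using \eqref{D pointwise} and the Stokes/product-rule identities consistently — whereas the promotion to all time is then a soft uniqueness argument rather than a computation.
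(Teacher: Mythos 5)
Your proof is correct, and it rests on the same ingredients as the paper's own proof: the weak potential vorticity equation \eqref{pv cg} with the flux choice \eqref{Q eqn}, the pointwise mass equation \eqref{D pointwise}, Stokes' theorem with the product rule, and positivity of $D^h$. The route differs in three genuine ways, though. First, you derive the $D^h$-weighted advective form
\begin{equation*}
\int_\Omega \gamma^h\, q^h_t\, D^h\diff S+\int_\Omega \gamma^h\,\diff q^h\wedge\star\tilde{F}^h=0,\qquad\forall\gamma^h\in\hat{\Lambda}^0,
\end{equation*}
valid for \emph{arbitrary} $q^h$, and only afterwards set $\diff q^h=0$; the paper assumes constancy from the outset, so its computation never exhibits this slightly stronger intermediate identity, which is the transparent discrete analogue of $D(q_t+\MM{u}\cdot\nabla q)=0$. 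Second, to conclude $q^h_t=0$ the paper constructs, for each test function $(\gamma^h)'$, an auxiliary $\gamma^h$ by inverting the $D^h$-weighted mass matrix (solvable because $D^h>0$), thereby reducing weighted orthogonality to unweighted $L_2$ orthogonality of $q^h_t$ against all of $\hat{\Lambda}^0$; you instead test the weighted identity directly with $\gamma^h=q^h_t$ and invoke positive-definiteness of the $D^h$-weighted inner product. The two uses of $D^h>0$ are equivalent, but yours avoids the auxiliary construction. Third, you make explicit, via uniqueness for the linear nonautonomous system $M(t)\dot{\MM{q}}=-A(t)\MM{q}$ with $M(t)$ symmetric positive definite, the passage from the instantaneous statement ``$q^h$ constant $\Rightarrow q^h_t=0$'' to constancy for all time; the paper asserts this step without argument, and your version is the more careful one, since $q^h$ is only a diagnostic quantity along the $(\MM{u}^h,D^h)$ trajectory. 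One caveat worth recording: your cancellation of the non-advective terms requires the sign convention $\dd{}{t}\int_\Omega\gamma^h\wedge q^hD^h\diff S-\int_\Omega\diff\gamma^h\wedge\star\tilde{Q}^h=0$, which is the convention the paper itself uses both in the discrete vorticity theorem and inside its own proof of this theorem; the opposite sign printed in \eqref{pv cg} appears to be a typographical slip, so your derivation, not the printed equation, is the internally consistent one.
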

\begin{proof}
Suppose that $q^h$ be constant. For any test function
$(\gamma^h)'\in \hat{\Lambda}^0$, define $\gamma^h\in \hat{\Lambda}^0$
such that 
\begin{equation}
\int_{\Omega} \beta^h \wedge \star (\gamma^h)' = 
\int_{\Omega} \beta^h \wedge \gamma^h D^h\diff S, \quad 
\forall \beta^h\in \hat{\Lambda}^0.
\end{equation}
This is always possible if $D^h>0$. Then
\begin{eqnarray}
\int_{\Omega} (\gamma^h)' \wedge q^h_t\diff S & = & 
\int_{\Omega} \gamma^h \wedge q^h_tD^h\diff S \\
& = & \int_{\Omega} \gamma^h \wedge (q^hD^h)_t\diff S - \int_{\Omega}
\gamma^h \wedge q^hD^h_t\diff S, \\
& = & -\int_{\Omega} \gamma^h \wedge \diff\left(q^h\star\tilde{F}^h\right)
+ \int_{\Omega} \gamma^h \wedge q^h\diff\left(\star\tilde{F}^h\right)\\
& = & -\int_{\Omega} \gamma^h \wedge q^h\diff\left(\star\tilde{F}^h\right)
+ \int_{\Omega} \gamma^h \wedge q^h\diff\left(\star\tilde{F}^h\right) = 0,
\quad \forall (\gamma^h)'\in \hat{\Lambda}^0.
\end{eqnarray}
 where we have made use of equation \eqref{pv cg} together with 
equation \eqref{D pointwise}, in the third line, and have
made use of $q^h$ being constant in the final line. Hence $q^h_t=0$
and $q^h$ remains constant for all time.
\end{proof}
As discussed in \cite{ArHs1990}, the geostrophically balanced
solutions of the rotating shallow-water equations are similar to the
two-dimensional Euler equations in that they exhibit an energy cascade
to large scales, but a potential enstrophy cascade to small scales.
This means that energy conservation is appropriate, but that enstrophy
conservation leads to a pile-up of enstrophy at the gridscale, leading
to very noisy numerical solutions. From a numerical analysis point of
view, we also expect this in our formulation since equation \eqref{pv cg}
 is a continuous finite element Galerkin approximation to the
potential vorticity equation in flux form, with no stabilisation.
This means that it becomes appropriate to introduce terms that
dissipate enstrophy whilst conserving energy and potential vorticity,
and whilst preserving the mass consistency property. We see from the
above that this is possible if we choose
$\star\tilde{Q}^h=(q^h)'\star\tilde{F}^h$ with $(q^h)'=q^h$ if $q^h$ is constant. In
\cite{ArHs1990}, the anticipated potential vorticity method
\cite{SaBa1985} was used as an enstrophy dissipation scheme. Here we
shall show how to introduce this into the finite element framework; we
shall also show that a streamline upwind Petrov-Galerkin (SUPG) scheme
\cite{BrHu1982} can be written in this form and thus conserves energy.
SUPG has the attractive feature of high-order convergence of
solutions.

Furthermore, although balanced solutions have layer thickness $D$
being two derivatives smoother than $q$, and so upwinding $D^h$ is not
always necessary, we are motivated by the use of the shallow-water
equations as a testbed for the horizontal aspects of discretisations
of the three-dimensional Euler equations for numerical weather
prediction, for which the energy also cascades to small scales, and so
we also discuss the use of upwind schemes for $D^h\diff S$, together
with shape preserving limiters, that dissipate potential energy. These
schemes lead to alternate choices of $\star\tilde{F}^h$, and so we can
still have potential vorticity conservation and mass consistency.

Returning to the choice of enstrophy dissipating
$\star\tilde{Q}^h$, the anticipated potential vorticity method
is obtained by setting
\begin{equation}
\label{apvm def}
\star\tilde{Q}^h = \left(q^h-\frac{\tau}{D^h}\contract{\MM{F}^h}\diff q^h\right)
\star\tilde{F}^h = \left(q^h+\frac{\tau}{D^h}\star
(\star\tilde{F}^h\wedge\diff q^h)\right)
\star\tilde{F}^h,
\end{equation}
where $\tau>0$ is an upwind parameter (usually proportional to the
time stepsize $\Delta t$).
\begin{theorem}[Anticipated potential vorticity method 
conserves energy and dissipates enstrophy] Let $\tilde{u}^h$ satisfy
  equation \eqref{u cg} and $D^h\diff S$ satisfy equation \eqref{D
    cg}. Furthermore assume that $\star\tilde{F}^h$,is
  obtained from \eqref{F eqn} and $\star\tilde{Q}^h$ is
  obtained from \eqref{apvm def}. Then energy is conserved and
  enstrophy is dissipated.
\end{theorem}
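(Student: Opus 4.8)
The plan is to dispatch the two assertions separately, starting with energy conservation, which follows almost for free from the Energy conservation theorem proved above. The anticipated potential vorticity flux \eqref{apvm def} is of the form $\star\tilde{Q}^h = (q^h)'\star\tilde{F}^h$ with scalar function $(q^h)' = q^h - \frac{\tau}{D^h}\contract{\MM{F}^h}\diff q^h$. The energy proof already established that $\dot{E}=0$ for \emph{any} vorticity flux of the form $\star\tilde{F}^h(q^h)'$ with $(q^h)'$ a scalar function --- the mechanism being that the only surviving term is $-\int_{\Omega}(q^h)'\,\star\tilde{F}^h\wedge\star\tilde{F}^h$, which vanishes by antisymmetry of the wedge product. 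Hence I would simply check that \eqref{apvm def} has this structure and invoke that result; no fresh computation is required for the energy statement.

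For enstrophy dissipation I would first obtain a master formula for $\dot{Z}$ valid for an arbitrary vorticity flux, recycling the chain of identities from the enstrophy computation in the Energy conservation theorem. Differentiating $Z=\int_{\Omega}(q^h)^2 D^h\diff S$ and rewriting it as $\int_{\Omega} 2q^h(q^hD^h)_t\diff S - \int_{\Omega}(q^h)^2 D^h_t\diff S$, then substituting the potential vorticity equation \eqref{pv cg} with $\gamma^h=q^h$ together with the pointwise mass balance \eqref{D pointwise}, I expect to arrive at
\[
\dot{Z} = 2\int_{\Omega}\diff q^h\wedge\star\tilde{Q}^h + \int_{\Omega}(q^h)^2\,\diff\!\left(\star\tilde{F}^h\right).
\]
Writing $\star\tilde{Q}^h = q^h\star\tilde{F}^h + \left(\star\tilde{Q}^h - q^h\star\tilde{F}^h\right)$, the conserving piece $q^h\star\tilde{F}^h$ reproduces exactly the integrand of that earlier calculation and collapses to $\int_{\Omega}\diff\!\left((q^h)^2\star\tilde{F}^h\right)=0$ by the product rule and Stokes' theorem on the closed surface $\Omega$. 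This leaves only the anticipation correction $\star\tilde{Q}^h - q^h\star\tilde{F}^h = -\frac{\tau}{D^h}\left(\contract{\MM{F}^h}\diff q^h\right)\star\tilde{F}^h$, so that
\[
\dot{Z} = -2\int_{\Omega}\frac{\tau}{D^h}\left(\contract{\MM{F}^h}\diff q^h\right)\,\diff q^h\wedge\star\tilde{F}^h.
\]

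The decisive step --- and the one place where care is needed --- is recognising that this integrand is sign-definite. The plan is to translate both occurrences of the advecting flux back into the same inner product using the vector-proxy dictionary. Since $\diff q^h=\widetilde{\nabla q^h}$, the contraction is the scalar $\contract{\MM{F}^h}\diff q^h = \diff q^h(\MM{F}^h) = \langle\nabla q^h,\MM{F}^h\rangle$, while the Hodge-star identity $\tilde{w}\wedge\star\tilde{u}=\langle\MM{w},\MM{u}\rangle\diff S$ gives $\diff q^h\wedge\star\tilde{F}^h = \langle\nabla q^h,\MM{F}^h\rangle\diff S$. The integrand is therefore the perfect square $\langle\nabla q^h,\MM{F}^h\rangle^2\diff S$, and
\[
\dot{Z} = -2\tau\int_{\Omega}\frac{1}{D^h}\,\langle\nabla q^h,\MM{F}^h\rangle^2\,\diff S \le 0,
\]
using $\tau>0$ and the standing assumption $D^h>0$, with equality precisely where $\MM{F}^h$ is everywhere orthogonal to $\nabla q^h$.

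The main obstacle I anticipate is not analytic but bookkeeping: the advecting flux enters $\dot{Z}$ twice in apparently different guises --- once as the contraction $\contract{\MM{F}^h}\diff q^h$ buried inside the scalar multiplier $(q^h)'$, and once inside the wedge $\diff q^h\wedge\star\tilde{F}^h$ --- and the whole argument hinges on identifying both as the single inner product $\langle\nabla q^h,\MM{F}^h\rangle$ so that a square, with the correct negative sign, emerges. A secondary point to verify is the sign convention threaded through \eqref{pv cg} and \eqref{D pointwise}: I would pin it down by demanding that the conserving flux reproduce $\dot{Z}=0$, which fixes the orientation so that the anticipation term is manifestly dissipative rather than anti-dissipative.
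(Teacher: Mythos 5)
Your proposal is correct and takes essentially the same route as the paper: energy conservation by invoking the scalar-multiplier form $\star\tilde{Q}^h=(q^h)'\star\tilde{F}^h$ so that $\star\tilde{Q}^h\wedge\star\tilde{F}^h=0$, and enstrophy dissipation by differentiating $Z$, substituting the potential vorticity equation with $\gamma^h=q^h$ and the pointwise mass equation, letting the conserving part cancel via the product rule and Stokes' theorem, and keeping only the anticipation correction. Your perfect-square expression $-2\int_{\Omega}\frac{\tau}{D^h}\langle\nabla q^h,\MM{F}^h\rangle^2\diff S$ coincides with the paper's $-\int_{\Omega}\frac{2\tau}{D^h}\left(\diff q^h\wedge\star\tilde{F}^h\right)\wedge\star\left(\diff q^h\wedge\star\tilde{F}^h\right)$, since $\diff q^h\wedge\star\tilde{F}^h=\langle\nabla q^h,\MM{F}^h\rangle\diff S$, and your anchoring of the sign convention to the conserving case resolves the orientation exactly as the paper's own computation implicitly does.
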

\begin{proof}
The energy is conserved since
\begin{equation}
  (\star\tilde{Q}^h) \wedge (\star \tilde{F}^h) = 
\left(q^h-\frac{\tau}{D^h}\star(\star\tilde{F}^h\wedge\diff q^h)\right)(\star\tilde{F}^h)
\wedge(\star\tilde{F}^h)=
0,
\end{equation}
so the energy conservation proof is unchanged. The enstrophy
equation becomes
\begin{eqnarray}
\dot{Z} & = & \int 2q^h\wedge (q^hD^h)_t\diff S - \int (q^h)^2D^h_t\diff S \\
 & = & -\int \frac{2\tau}{D^h}\left(\diff q^h\wedge \star\tilde{F}^h\right)
\wedge\star\left(\diff q^h\wedge \star\tilde{F}^h\right)<0.
\end{eqnarray}
\end{proof}
The SUPG flux is given by
\begin{equation}
\label{SUPG def}
\star\tilde{Q}^h = \star\tilde{F}^h\left(q^h 
 - \frac{\tau}{D^h}\star\left(
\pp{q^hD^h}{t}\diff S + \diff(\star\tilde{F}^hq^h)\right)\right),
\end{equation}
where $\tau>0$ is the spatially varying SUPG parameter
given by
\begin{equation}
\tau = \frac{\alpha}{h|\MM{u}^h|},
\end{equation}
with $\alpha>0$ some chosen constant. 
\begin{theorem}[SUPG flux]
Let $\tilde{u}^h$ satisfy
  equation \eqref{u cg} and $D^h\diff S$ satisfy equation \eqref{D
    cg}. Furthermore assume that $\star\tilde{F}^h$,is
  obtained from \eqref{F eqn} and $\star\tilde{Q}^h$ is
  obtained from \eqref{SUPG def}. Then energy is conserved and
  the flux provides an SUPG stabilisation of equation \eqref{pv cg}.
\end{theorem}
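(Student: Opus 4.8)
The plan is to treat the two claims separately, leaning on the machinery already established for the earlier flux definitions. For energy conservation I would simply check that \eqref{SUPG def} has the structure required by the final clause of the Energy conservation theorem, namely $\star\tilde{Q}^h=(q^h)'\,\star\tilde{F}^h$ for the scalar $0$-form
\begin{equation}
(q^h)' = q^h - \frac{\tau}{D^h}\star\left(\pp{q^hD^h}{t}\diff S + \diff(\star\tilde{F}^hq^h)\right).
\end{equation}
Since $(q^h)'$ is a $0$-form, $\star\tilde{Q}^h\wedge\star\tilde{F}^h=(q^h)'\,\star\tilde{F}^h\wedge\star\tilde{F}^h=0$ by antisymmetry of the wedge product of a $1$-form with itself, so the energy-conservation computation carries over verbatim and no new estimate is needed.

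The substantive claim is that \eqref{pv cg} equipped with the flux \eqref{SUPG def} is an SUPG discretisation of the PV transport equation. My strategy is to split the flux into a Galerkin part and a stabilising part, writing $\star\tilde{Q}^h = q^h\star\tilde{F}^h - \frac{\tau}{D^h}(\star R)\,\star\tilde{F}^h$ with the strong-form residual $2$-form $R := \pp{q^hD^h}{t}\diff S + \diff(q^h\star\tilde{F}^h)$. Substituting into \eqref{pv cg}, the $q^h\star\tilde{F}^h$ term reproduces the unstabilised continuous-Galerkin PV equation considered after the PV conservation theorem, and the leftover contribution is
\begin{equation}
-\int_\Omega \frac{\tau}{D^h}(\star R)\,\diff\gamma^h\wedge\star\tilde{F}^h.
\end{equation}

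The heart of the proof is to recognise this leftover term as the standard SUPG stabilisation by applying the Hodge-star dictionary to its two metric-dependent factors. Working term by term with $\star(g\diff S)=g$ and $\diff(q^h\star\tilde{F}^h)=\nabla\cdot(q^h\MM{F}^h)\diff S$ gives $\star R=\pp{q^hD^h}{t}+\nabla\cdot(q^h\MM{F}^h)$, the pointwise strong residual of the conservative PV equation; while the inner-product identity $\tilde{w}\wedge\star\tilde{u}=\langle\MM{w},\MM{u}\rangle\diff S$, taken with $\MM{w}=\nabla\gamma^h$ (so $\tilde{w}=\diff\gamma^h$) and $\MM{u}=\MM{F}^h$, gives $\diff\gamma^h\wedge\star\tilde{F}^h=(\MM{F}^h\cdot\nabla\gamma^h)\diff S$. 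Combining the two, the leftover term becomes
\begin{equation}
-\int_\Omega \tau\left(\frac{\MM{F}^h}{D^h}\cdot\nabla\gamma^h\right)\left(\pp{q^hD^h}{t}+\nabla\cdot(q^h\MM{F}^h)\right)\diff S,
\end{equation}
which is exactly an SUPG term: the test function $\gamma^h$ is perturbed along streamlines by $\tau\,(\MM{F}^h/D^h)\cdot\nabla\gamma^h$, with advecting velocity $\MM{F}^h/D^h$ and the prescribed parameter $\tau=\alpha/(h|\MM{u}^h|)$, and this perturbation is paired against the strong residual of the conservative PV equation.

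I expect the only real obstacle to be careful bookkeeping rather than any conceptual difficulty: one must track the sign and the $1/D^h$ weighting faithfully through the two dictionary substitutions, and confirm that the sign produced by integrating \eqref{pv cg} by parts coincides with the dissipative sign of the conventional SUPG term, exactly as happens for the closely related APVM flux \eqref{apvm def}. Once those two substitutions are in place the identification is immediate.
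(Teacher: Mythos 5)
Your proposal follows essentially the same route as the paper's proof: energy conservation by invoking the general clause $\star\tilde{Q}^h=\star\tilde{F}^h(q^h)'$ of the energy theorem, and the SUPG identification by substituting \eqref{SUPG def} into \eqref{pv cg} and applying the Hodge-star dictionary ($\star R$ equals the pointwise strong residual, and $\diff\gamma^h\wedge\star\tilde{F}^h=(\MM{F}^h\cdot\nabla\gamma^h)\diff S$); the paper merely takes one further cosmetic step, un-integrating the Galerkin term by parts so that the whole equation reads (perturbed test function) $\wedge$ (strong residual) with test function $\gamma^h+\frac{\tau}{D^h}\MM{F}^h\cdot\nabla\gamma^h$, which is equivalent to your Galerkin-plus-stabilisation split since $q^h\star\tilde{F}^h$ has enough continuity for $\diff(q^h\star\tilde{F}^h)$ to be globally defined. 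The sign check you defer is the one genuinely delicate point: as printed, \eqref{pv cg} carries the opposite sign to the discrete vorticity-conservation theorem from which it is derived (an inconsistency in the paper itself), and only under the self-consistent convention does your leftover term acquire the downstream, dissipative sign $+\tau(\MM{F}^h/D^h)\cdot\nabla\gamma^h$ paired with the same residual as the Galerkin part, which is what the paper's proof silently assumes in arriving at its final rearranged form.
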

\begin{proof}
The energy is conserved since $(\star\tilde{Q}^h) \wedge \star
(\star\tilde{F}^h) = 0$. Equation \eqref{pv cg} becomes
\begin{equation}
\int_{\Omega} \gamma^h \wedge (q^hD^h)_t\diff S 
 = \int_{\Omega} \diff \gamma^h \wedge \star\tilde{F}^hq^h
 - \int_{\Omega} \frac{\tau}{D^h}\diff \gamma^h \wedge \star\tilde{F}^h
\star\left(\pp{q^hD^h}{t}\diff S + \diff(q^h\star\tilde{F}^h)\right),
\end{equation}
and the last term may be rewritten as 
\begin{equation}
\int_{\Omega} \frac{\tau}{D^h}(\star\tilde{F}^h)\wedge\diff\gamma^h \wedge
\star\diff(q^h\star\tilde{F}^h),
\end{equation}
and rearranging gives
\begin{equation}
\int_{\Omega} \left(\gamma^h - \frac{\tau}{D^h}\star(\star\tilde{F}^h\wedge
\diff \gamma^h)\right)\wedge\left(\pp{}{t}(q^hD^h)\diff S + 
\diff(q^h\star\tilde{F}^h)\right)=0,
\end{equation}
which is an SUPG stabilisation of the potential vorticity equation
since $\gamma^h$ has been replaced by $\gamma^h 
 - \frac{\tau}{D^h}\star(\star\tilde{F}^h\wedge
\diff \gamma^h)=\gamma^h + \frac{\tau}{D^h}\MM{F}^h\cdot\nabla q^h$.
\end{proof}

Next we discuss the incorporation of upwinding into the mass equation
\eqref{D eqn}. First we describe the usual upwinding approach, then we
show how an equivalent mass flux $\star\tilde{F}^h$ may be
obtained. Then we discuss slope limiters that are used to enforce
shape preservation when polynomials of degree 1 or greater are used in
$\hat{\Lambda}^2$, and show that an equivalent (time-integrated) mass
flux can be obtained in that case as well.

In deriving an upwind formulation for $D^h$, the integral must be
performed over a single element $e$ due to the discontinuity,
following the discontinuous Galerkin approach.  Taking the $L_2$ inner
product of a test 2-form $\phi^h \diff{S}\in \hat{\Lambda}^2$ with
equation \eqref{D eqn} over one element $e$ gives
\begin{equation}
\dd{}{t}\int_e \phi^h \diff{S} \wedge \star (D^hd S)
= -\int_e\phi^h \diff{S} \wedge \star\diff (D^h\star\tilde{u}^h).
\end{equation}
To obtain coupling between elements we integrate by parts to obtain
\begin{equation}
\label{D dg}
\dd{}{t}\int_e \phi^h \wedge D^h\diff S
= \int_e\diff\phi^h \wedge D^h\star\tilde{u}^h - \int_{\partial e}
\phi^h \wedge D^u\star\tilde{u}^h,
\qquad \forall \phi^h \diff{S} \in \hat{\Lambda}^2,
\end{equation}
where $\partial e$ is the boundary of element $e$ and $D^u$ is
chosen as the value of $D^h$ on the upwind side, following the standard
discontinuous Galerkin approach\footnote{If $k$th order polynomials
  are used, then this flux is $(k+1)$th order accurate. In the case of
  piecewise constant spaces, higher order advection schemes can be
  obtained by reconstructing a higher order upwind flux by
  interpolation from neighbouring elements, using WENO \cite{Sh2009}
  or Crowley schemes \cite{Th1997,Li2005,SkMe2010}, for example.}.  In
contrast with equation \eqref{F eqn}, equation \eqref{D dg} can be
solved locally \emph{i.e.} it only requires the solution of
independent matrix-vector equations in each element to obtain
$\pp{D^h}{t}=0$.
\begin{theorem}[Mass flux for upwind schemes]
\label{mass flux prop}
Let $D^h\diff S\in \hat{\Lambda}^2$ satisfy equation \eqref{D dg}. Then
there exists $\star\tilde{F}^h\in \hat{\Lambda}^1$ such 
that 
\begin{equation}
\label{D ptwise}
\pp{D^h}{t} + \diff\left(\star\tilde{F}^h\right) =0 .
\end{equation}
Furthermore we can calculate $\star\tilde{F}^h$ locally,
\emph{i.e.} independently in each element.
\end{theorem}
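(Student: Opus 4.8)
The plan is to construct a specific flux $\star\tilde{F}^h$ element by element by prescribing its degrees of freedom directly, rather than appealing only to an abstract surjectivity argument; existence and local computability then fall out of the same construction. Recall that the degrees of freedom of the div-conforming space $\hat{\Lambda}^1$ split into \emph{edge} degrees of freedom, which fix the normal trace (flux) of $\star\tilde{F}^h$ across each element edge, and \emph{interior} degrees of freedom. I would fix the edge degrees of freedom from the data already present in the discontinuous Galerkin scheme \eqref{D dg}, and then determine the interior degrees of freedom by a local solve in each element so as to enforce \eqref{D ptwise} pointwise.

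First I would set the normal trace of $\star\tilde{F}^h$ on each edge equal to the upwind flux $D^u\star\tilde{u}^h$ appearing in the boundary term of \eqref{D dg} (projected onto the edge degrees of freedom). Since $\star\tilde{u}^h\in\hat{\Lambda}^1$ has continuous normal component and $D^u$ is the single upwind value, this prescribed flux is single-valued across each edge; consequently $\star\tilde{F}^h$ has continuous normal trace and genuinely lies in $\hat{\Lambda}^1$. This step uses only edge-local data, so the elements decouple. Taking $\phi^h=1$ on a single element $e$ in \eqref{D dg} gives $\int_e \pp{D^h}{t}\diff S = -\int_{\partial e} D^u\star\tilde{u}^h$, so with the above choice the total normal flux $\int_{\partial e}\star\tilde{F}^h$ equals $-\int_e \pp{D^h}{t}\diff S$; this is exactly the compatibility condition needed below.

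With the edge degrees of freedom fixed, in each element I would solve $\diff(\star\tilde{F}^h)|_e = -\pp{D^h}{t}\diff S$, an equation in $\hat{\Lambda}^2(e)$, for the remaining interior degrees of freedom. Writing $\star\tilde{F}^h = \star\tilde{F}^h_\partial + \star\tilde{F}^h_0$, where $\star\tilde{F}^h_\partial$ carries the prescribed edge data (and vanishing interior degrees of freedom) and $\star\tilde{F}^h_0$ has vanishing normal trace, the problem reduces to $\diff(\star\tilde{F}^h_0)= -\pp{D^h}{t}\diff S - \diff(\star\tilde{F}^h_\partial)$. By Stokes' theorem the right-hand side integrates to zero over $e$, precisely because of the compatibility identity established in the previous paragraph. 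The operator $\diff$ maps the zero-normal-trace subspace of $\hat{\Lambda}^1(e)$ onto the mean-zero subspace of $\hat{\Lambda}^2(e)$ --- this is the element-local exactness of the finite element de Rham complex with homogeneous boundary conditions, guaranteed by the FEEC construction \cite{ArFaWi2006} --- so the local problem is solvable. Each such solve involves only a single element, establishing local computability; assembling the edge-continuous pieces produces a global $\star\tilde{F}^h\in\hat{\Lambda}^1$ satisfying \eqref{D ptwise}, which also yields existence.

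The main obstacle is the local solve in the last step, and specifically the surjectivity of the element-local divergence from zero-normal-trace $1$-forms onto mean-zero $2$-forms. I would lean on the FEEC exact-sequence structure for this, rather than a hands-on basis computation; the role of the upwind edge flux is then twofold --- it localises the edge degrees of freedom so that the interior solves decouple, and its zeroth moment furnishes exactly the mean-zero compatibility that the surjectivity argument requires. A secondary point to check is that only the element-integral of $\pp{D^h}{t}$ need be matched through the edges (the compatibility condition), while the mean-zero remainder is absorbed entirely by the interior degrees of freedom; this is what makes the per-element reconstruction well posed for every polynomial degree.
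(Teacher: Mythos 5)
Your construction is correct, and it is close in spirit to the paper's but resolves the interior of each element by a different mechanism. The paper also pins the edge degrees of freedom to the upwind flux $D^u\star\tilde{u}^h$, but then it prescribes \emph{all} remaining degrees of freedom explicitly: interior moments $\int_e\diff\phi^h\wedge\star\tilde{F}^h$ are set to match the element flux $D^h\star\tilde{u}^h$, and the leftover curl-type moments $\int_e\diff\gamma^h\wedge\star\tilde{F}^h$ (for $\gamma^h$ vanishing on $\partial e$) are set to zero; this Fortin-type set of conditions is asserted to be unisolvent \cite{Fo1977,Ar2013}, and equation \eqref{D ptwise} is then \emph{verified} weakly, by integrating $\int_e\phi^h\wedge\diff(\star\tilde{F}^h)$ by parts against every $\phi^h\diff S\in\hat{\Lambda}^2(e)$ and recognising the right-hand side of \eqref{D dg} --- so the full strength of the DG equation is invoked, and pointwise equality follows because both sides lie in the discontinuous space $\hat{\Lambda}^2(e)$. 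You instead leave the interior degrees of freedom implicit: they are any solution of the element-local problem $\diff(\star\tilde{F}^h_0)=-\pp{D^h}{t}\diff S-\diff(\star\tilde{F}^h_\partial)$, whose solvability rests on (i) exactness of the local FEEC complex with vanishing-trace boundary conditions (the exterior derivative maps the zero-normal-trace subspace of $\hat{\Lambda}^1(e)$ onto the mean-zero subspace of $\hat{\Lambda}^2(e)$, as guaranteed by the framework of \cite{ArFaWi2006}), and (ii) a compatibility condition, which you correctly identify as exactly the $\phi^h=1$ instance of \eqref{D dg}. What each route buys: yours uses only the cell-average content of \eqref{D dg} plus a standard exactness theorem, so it is arguably more robust --- it applies uniformly to any compatible pair $(\hat{\Lambda}^1,\hat{\Lambda}^2)$ without having to check unisolvence of a particular degree-of-freedom set (a check that, as written in the paper with test functions drawn only from $\hat{\Lambda}^2(e)$, is genuinely space-dependent). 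The cost is that your flux is not unique (its interior part is determined only up to the kernel of $\diff$ on zero-trace fields) and bears no relation to the physical flux $D^h\star\tilde{u}^h$ inside the element, whereas the paper's Fortin-style flux is canonical and stays interior-consistent with $D^h\star\tilde{u}^h$, which is the natural choice when $\star\tilde{F}^h$ is reused downstream in the potential vorticity flux $\star\tilde{Q}^h=q^h\star\tilde{F}^h$. For the theorem as stated --- existence plus local computability --- either construction suffices.
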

\begin{proof}Consider $\star\tilde{F}^h$ constructed from the following
conditions.
\begin{enumerate}
\item \begin{equation}
\int_{\partial e} \phi^h \wedge \star\tilde{F}^h
= \int_{\partial e} \phi^h \wedge D^u\tilde{u}^h,
\qquad \forall \phi^h \diff S\in \hat{\Lambda}^2(e),
\end{equation}
\item \begin{equation}
\int_e \diff\phi^h\wedge \star\tilde{F}^h
= -\int_e \diff\phi^h  \wedge D^h\star\tilde{u}^h, \qquad
\forall \phi^h\diff S\in \hat{\Lambda}^2(e),
\end{equation}
\item \begin{equation}
\int_e \diff \gamma^h \wedge  \star\tilde{F}^h 
= 0, \qquad \forall \gamma^h \in \hat{\Lambda}^0(e) \textrm{ such that }
\gamma^h=0 \textrm{ on }\partial e.
\end{equation}
\end{enumerate}
 This is essentially the Fortin projection into $\hat{\Lambda}^1$
 \cite{Fo1977}. These conditions are unisolvent and lead to the
 correct continuity conditions as shown in \cite{Ar2013}.
 From equation \eqref{D pointwise} we have
\begin{eqnarray}
\dd{}{t}\int_e \phi^h \wedge D^h \diff S & = & -\int_e \phi^h \wedge
\diff(\star\tilde{F}^h) \\
 & = & \int_e \diff \phi^h \wedge \star\tilde{F}^h
 - \int_{\partial e}\phi^h\wedge \star\tilde{F}^h, \\
 & = & \int_e \diff \phi^h \wedge D^h\star\tilde{u}^h
 - \int_{\partial e} \phi^h \wedge D^h\star\tilde{u}^h,
\end{eqnarray}
as required.
\end{proof}
Having obtained $\star\tilde{F}^h$ we can use it in our
definition of $\star\tilde{Q}^h$ and obtain stabilised, conservative,
mass consistent potential vorticity dynamics.

This calculation can be extended to the time-discretised case in which
a slope limiter is applied before each timestep or Runge-Kutta stage
\cite{CoSh2001}. In each element $e$, slope limiters aim to achieve
shape preservation by adjusting $D^h\diff{S}$ in each element $e$ in
such a way that $\bar{D}_e^h$ is preserved, where
\begin{equation}
\bar{D}_e^h = \frac{\int_e D^h \diff{S}}{\int_e \diff{S}}.
\end{equation}
\begin{theorem}[Mass flux from slope limiter]
Let $S(D^h\diff S)$ be the action of a slope limiter on $D^h\diff
S\in\hat{\Lambda}^2$. Then
\begin{equation}
S(D^h\diff{S})= D^h\diff{S}+\diff(\star\tilde{F}^h_s),
\end{equation}
for some slope limiter mass flux $\star\tilde{F}^h_s\in \hat{\Lambda}^1$
that can be calculated locally.
\end{theorem}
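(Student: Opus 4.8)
The plan is to realise the limiter as a purely local, mass-conserving reshaping of $D^h\diff S$ within each element, and to represent that reshaping as the exterior derivative of a flux whose normal trace vanishes on every element boundary, so that the assembled field is automatically div-conforming. First I would set $\Delta := S(D^h\diff S) - D^h\diff S \in \hat{\Lambda}^2$ and extract the one essential input from the limiter: since $S$ preserves the element mean $\bar{D}^h_e$, we have $\int_e \Delta = 0$ for every element $e$. This is exactly the compatibility condition that Stokes' theorem forces on any $\star\tilde{F}^h_s$ with $\diff(\star\tilde{F}^h_s)=\Delta$ and vanishing normal trace on $\partial e$, since then $\int_e \diff(\star\tilde{F}^h_s) = \int_{\partial e}\star\tilde{F}^h_s = 0$. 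The mean-preserving property is therefore precisely what makes a conforming representing flux possible.

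Next, on each element separately I would construct $\star\tilde{F}^h_s|_e\in\hat{\Lambda}^1(e)$ solving $\diff(\star\tilde{F}^h_s)=\Delta$ subject to vanishing normal trace on $\partial e$. This mirrors the Fortin-type construction of Theorem~\ref{mass flux prop}, but with the prescribed boundary flux set to zero, because the limiter transports no mass between elements; I would borrow the unisolvence and interelement-continuity machinery cited there from \cite{Fo1977,Ar2013}. Vanishing normal traces on every edge guarantee that the element contributions glue together into a global $\star\tilde{F}^h_s\in\hat{\Lambda}^1$ (matching continuous normal components, both zero, across each edge), and since each solve sees only the data on $e$, the flux is computed locally, as claimed.

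The crux is the local solvability of $\diff(\star\tilde{F}^h_s)=\Delta$ inside the subspace of $\hat{\Lambda}^1(e)$ with zero normal trace. Here I would appeal to the exactness of the polynomial de Rham subcomplex with vanishing traces on a single contractible element, as established in \cite{ArFaWi2006}: the only cohomological obstruction in top degree is the element integral, so $\diff$ maps the interior $1$-forms onto exactly the mean-zero subspace $\{\omega\in\hat{\Lambda}^2(e):\int_e\omega=0\}$. Combined with $\int_e\Delta=0$, this delivers a solution. A dimension count for a concrete pairing such as $BDM_k$ with $P_{k-1}^{DG}$ confirms the surjectivity, the kernel being accounted for by $\diff$ of the interior $\hat{\Lambda}^0$ bubbles.

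I expect this exactness-with-boundary-conditions step to be the main obstacle, since it is what upgrades the zero-mean property of the limiter from merely necessary to sufficient for a conforming, element-local flux to exist; everything else is bookkeeping once it is in hand. Finally, the time-discretised statement follows by accumulating the element-local fluxes $\star\tilde{F}^h_s$ produced at each timestep or Runge--Kutta stage into a single time-integrated mass flux, exactly as in the advective case treated by Theorem~\ref{mass flux prop}.
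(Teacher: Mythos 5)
Your proposal is correct, and it reaches the result by a meaningfully different route than the paper. The paper's proof is constructive: it prescribes $\star\tilde{F}^h_s$ element-by-element through a Fortin-projection-style set of degrees of freedom --- zero boundary moments against traces of $\hat{\Lambda}^2(e)$ functions, interior moments $\int_e \diff\phi^h\wedge\star\tilde{F}^h_s = -\int_e\phi^h\wedge\left(S(D^h\diff S)-D^h\diff S\right)$, and zero moments against $\diff\gamma^h$ for interior bubbles $\gamma^h$ --- cites unisolvence from \cite{Fo1977,Ar2013} exactly as in Theorem \ref{mass flux prop}, and then obtains $\diff(\star\tilde{F}^h_s)=S(D^h\diff S)-D^h\diff S$ pointwise by integration by parts plus the triviality of the $L_2$ projection within $\hat{\Lambda}^2$. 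You instead argue existence abstractly: the per-element increment $\Delta$ has zero mean (because the limiter preserves $\bar{D}^h_e$), and exactness of the local polynomial de Rham complex with vanishing-trace boundary conditions shows that $\diff$ maps the zero-normal-trace subspace of $\hat{\Lambda}^1(e)$ onto the mean-zero subspace of $\hat{\Lambda}^2(e)$, so a solution exists and glues globally. Both are element-local, zero-boundary-flux constructions, so the resulting fluxes have identical structure; what differs is the existence mechanism, and each buys something. Your route makes explicit the role of mean preservation: it is precisely the cohomological compatibility condition without which no conforming flux can exist --- a point the paper leaves implicit, and in fact the paper's prescription is only self-consistent because of it (take $\phi^h$ constant in the paper's interior condition: the left side vanishes identically, so the data must satisfy $\int_e\Delta=0$). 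The paper's route, in exchange, yields a concrete, uniquely determined, directly implementable flux --- the same local solve as for the advective flux of Theorem \ref{mass flux prop} with modified right-hand data --- whereas exactness alone gives existence without selecting a canonical representative; this is harmless for the theorem as stated but less immediately algorithmic. Your dimension count for the $BDM$/$P_{DG}$ pairing (kernel spanned by $\diff$ of the $\hat{\Lambda}^0$ bubbles) is a correct sanity check of the surjectivity claim.
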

\begin{proof}
Consider $\star\tilde{F}^h_s$ constructed from the following
conditions.
\begin{enumerate}
\item \begin{equation}
\int_{\partial e} \phi^h \wedge \star\tilde{F}^h_s
= 0, 
\qquad \forall \phi^h \diff S\in \hat{\Lambda}^2(e),
\end{equation}
\item \begin{equation}
\int_e \diff\phi^h\wedge \star\tilde{F}^h_s
= -\int_e \phi^h  \wedge \left(S(D^h\diff S)-D^h\diff S\right), \qquad
\forall \phi^h\diff S\in \hat{\Lambda}^2(e),
\end{equation}
\item \begin{equation}
\int_e \diff \gamma^h \wedge  \star\tilde{F}^h_s
= 0, \qquad \forall \gamma^h \in \hat{\Lambda}^0(e) \textrm{ such that }
\gamma^h=0 \textrm{ on }\partial e.
\end{equation}
\end{enumerate}
These conditions are again unisolvent. Then
\begin{eqnarray}
\int_e \phi^h \wedge \diff(\star\tilde{F}^h_s)
& = & -\int_e \diff \phi^h \wedge \star\tilde{F}^h_s
 + \int_{\partial e} \phi^h \wedge \star\tilde{F}^h_s, \\
& = & \int_e \phi^h\wedge(S(D^h\diff S)-D^h\diff S), \quad
\forall \phi^h \in \hat{\Lambda}^2.
\end{eqnarray}
Since $S(D^h\diff S)$, $D^h\diff S$ and $\diff\star\tilde{F}^h_s$,
are all elements of $\hat{\Lambda}^2$, this means that 
the projection is trivial and we obtain
\begin{equation}
\diff\left(\star\tilde{F}^h_s\right) = S(D^h\diff S)-D^h\diff S,
\end{equation}
as required.
\end{proof}

\subsection{Finite element discretisation: primal-dual grid finite
  element formulation}
\label{dual grid formulation}

In this section we provide an alternative formulation that makes use
of a second set of spaces defined on a dual grid based on the second
vector field proxy. The introduction of the dual grid means that we
can now express $\nabla$ and $\nabla^\perp\cdot$ in a strong
form, in addition to $\nabla^\perp$ and $\nabla\cdot$.  The idea is
that when we want to apply $\nabla^\perp$ and $\nabla\cdot$ operators
strongly we use the primal grid spaces as defined in the previous
section, and when we want to apply $\nabla$ and $\nabla^\perp\cdot$
strongly we use the dual grid spaces. This requires defining mappings
between the primal and dual spaces which are defined \emph{via} the
Hodge star operator. We shall observe that primal dual and primal-dual
formulations are exactly equivalent for the linear equations, but that
they differ for the nonlinear equations; the primal-dual scheme may
facilitate some alternative handling of nonlinear terms that gives
some advantage. One particular benefit is that locally conservative
discontinuous schemes can then be used for both mass and potential
vorticity.

We start by selecting a set of finite element differential form spaces
$\hat{\Lambda}^k_p\subset \Lambda^k$ and $\hat{\Lambda}^k_d\subset
\Lambda^k$ on the primal grid and the dual grid respectively,
satisfying
\begin{equation}
 \begin{CD}
  \hat{\Lambda}^0_p @>\diff >> \hat{\Lambda}^1_p @>\diff >> 
  \hat{\Lambda}^2_p \\
  \hat{\Lambda}^0_d @>\diff >> \hat{\Lambda}^1_d @>\diff >> 
  \hat{\Lambda}^2_d.
 \end{CD}
\end{equation}
We shall calculate with mass $D^h\diff{S}$ in $\hat{\Lambda}^2_p$, and
vorticity $\zeta^h\diff{S}$ in $\hat{\Lambda}^2_d$, which are both
discontinuous finite element spaces where locally conservative
discontinuous methods can be used. We shall use the flux 1-form
representation of velocity $\star\tilde{u}^h$ in $\hat{\Lambda}^1_p$ (to
evaluate divergence) but also work with a consistent circulation
1-form representation of velocity $\tilde{v}^h$ in $\hat{\Lambda}^1_d$
(to evaluate vorticity), with $\tilde{u}^h\ne\tilde{v}^h$ but related
through an appropriate mapping.

\paragraph{Discrete Hodge star}
Following \cite{Hi2001a,Hi2001b}, we define discrete Hodge star operators:
$\star_h:\hat{\Lambda}_d^k\to\hat{\Lambda}_p^{2-k}$ given by
\begin{equation}
\int \gamma^h \wedge \star (\star_h\omega^h) = (-1)^k\int \gamma^h \wedge
\omega^h, \quad \forall \gamma^h \in \hat{\Lambda}^k_p,
\end{equation}
and require that the dual spaces are chosen such that $\star_h$ is
invertible. This requirement somewhat limits the choice of spaces.
\cite{BuCh2007} (see also \cite{Ch2008}) proved that $\star_h$ is
invertible for the hexagonal P1-RT0-P0 spaces for the primal mesh, and
the triangular P1-N0-P0 spaces for the dual mesh, which would have the
same degree-of-freedom points as the hexagonal C-grid. We have also
observed numerically that $\star_h$ is invertible for similar spaces with
quadrilaterals for the primal mesh.

The discrete dual operator $\delta^h$ provides weak approximations to
$\nabla$ and $\nabla^\perp\cdot$ in the primal space, as described in
the previous section for the primal finite element formulation, and
weak approximations to $\nabla^\perp$ and $\nabla\cdot$ in the dual
space. We shall now see that the key to the formulation is that we can
define a simple relationship \emph{via} the discrete Hodge star
$\star_h$ between $\diff$ in the primal space and $\delta^h$ in the dual
space, and \emph{vice versa}.

\begin{theorem}[Mapping from $\diff$ to $\delta^h$]
For $\omega^h\in\hat{\Lambda}^k_d$, $k=0,1$,
$\star_h\diff\omega^h=\delta^h\star_h\omega^h$, and hence the
following diagram commutes:
\begin{equation}
 \begin{CD}
  @>\delta^h >>  @>\delta^h >> \\
 \hat{\Lambda}^2_d @<\diff << \hat{\Lambda}^1_d @<\diff << \hat{\Lambda}^0_d \\ 
 @VV{\star_h}V @VV{\star_h}V @VV{\star_h}V \\
 \hat{\Lambda}^0_p @> d >> \hat{\Lambda}^1_p @>\diff >> \hat{\Lambda}^2_p \\
  @<\delta^h <<  @< \delta^h <<
 \end{CD}
\end{equation}
\end{theorem}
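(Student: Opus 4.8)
The plan is to prove the identity by testing both sides against the finite element $L_2$ inner product, since $\star_h\diff\omega^h$ and $\delta^h\star_h\omega^h$ both lie in $\hat{\Lambda}^{1-k}_p$ (for $\omega^h\in\hat{\Lambda}^k_d$ we have $\diff\omega^h\in\hat{\Lambda}^{k+1}_d$, so $\star_h\diff\omega^h\in\hat{\Lambda}^{2-(k+1)}_p=\hat{\Lambda}^{1-k}_p$, while $\star_h\omega^h\in\hat{\Lambda}^{2-k}_p$ gives $\delta^h\star_h\omega^h\in\hat{\Lambda}^{1-k}_p$ as well). Because the pairing $\int_{\Omega}\gamma^h\wedge\star(\cdot)$ is nondegenerate on the finite-dimensional space $\hat{\Lambda}^{1-k}_p$, it suffices to show that $\int_{\Omega}\gamma^h\wedge\star(\star_h\diff\omega^h)=\int_{\Omega}\gamma^h\wedge\star(\delta^h\star_h\omega^h)$ for every $\gamma^h\in\hat{\Lambda}^{1-k}_p$.

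For the right-hand side I would first use the defining relation of $\delta^h$ (its adjointness to $\diff$ within the primal complex) to move the derivative onto the test form, giving $\int_{\Omega}\gamma^h\wedge\star\delta^h(\star_h\omega^h)=\int_{\Omega}\diff\gamma^h\wedge\star(\star_h\omega^h)$. The key observation is that $\diff\gamma^h\in\hat{\Lambda}^{2-k}_p$ is exactly of the degree required to act as a legitimate test form in the definition of $\star_h$ applied to $\omega^h\in\hat{\Lambda}^k_d$; invoking that definition removes the continuous Hodge star and produces $(-1)^k\int_{\Omega}\diff\gamma^h\wedge\omega^h$, a purely topological pairing of $\diff\gamma^h$ with the dual form $\omega^h$.

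For the left-hand side I would apply the definition of $\star_h$ directly to the dual $(k+1)$-form $\diff\omega^h$, using $\gamma^h\in\hat{\Lambda}^{1-k}_p=\hat{\Lambda}^{2-(k+1)}_p$ as the test form, which yields $(-1)^{k+1}\int_{\Omega}\gamma^h\wedge\diff\omega^h$. It then remains to reconcile the two resulting integrals $\int_{\Omega}\diff\gamma^h\wedge\omega^h$ and $\int_{\Omega}\gamma^h\wedge\diff\omega^h$; this is where I would use that $\Omega$ is closed, so $\int_{\Omega}\diff(\gamma^h\wedge\omega^h)=0$, together with the Leibniz rule $\diff(\gamma^h\wedge\omega^h)=\diff\gamma^h\wedge\omega^h+(-1)^{1-k}\gamma^h\wedge\diff\omega^h$, which relates them by the sign $(-1)^{1-k}$.

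The main obstacle is the sign bookkeeping, not the structure of the argument. Three sources of sign enter — the factor $(-1)^k$ in the discrete Hodge star, the Leibniz sign $(-1)^{1-k}$ from integration by parts, and the convention $\star\star=\pm\Id$ implicit in the adjoint $\delta^h$ — and assembling them is delicate because the continuous prototype $\delta=\pm\star\diff\star$ carries a \emph{degree-dependent} sign on a surface. I would therefore verify the cancellation separately for the two cases $k=0$ and $k=1$, checking that under the normalisations of $\star_h$ and $\delta^h$ adopted here the residual factor is $+1$, so that the squares of the diagram commute on the nose. Once this is confirmed the two pairings coincide for all $\gamma^h\in\hat{\Lambda}^{1-k}_p$, hence $\star_h\diff\omega^h=\delta^h\star_h\omega^h$, which is precisely the commutativity asserted in the diagram; no unisolvence or topological input beyond closedness of $\Omega$ and the two defining relations is required.
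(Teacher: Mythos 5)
Your proposal is correct and is essentially the paper's own proof: the paper likewise tests against an arbitrary primal test form $\gamma^h$, unfolds the definition of $\star_h$, integrates by parts on the closed surface $\Omega$, reapplies the definition of $\star_h$, and recognises the definition of $\delta^h$ --- its single four-line chain is exactly your two half-computations meeting at the topological pairing $\int_\Omega \diff\gamma^h\wedge\omega^h$. One warning about the sign verification you defer: it is genuinely delicate, and under the paper's literal $(-1)^k$ normalisation of $\star_h$ the residual factor works out to $(-1)^{k+1}$, i.e.\ $+1$ for $k=1$ but $-1$ for $k=0$; the paper's proof conceals this by writing its integration-by-parts step as $(-1)^{k-1}\int\gamma^h\wedge\diff\omega^h=(-1)^k\int\diff\gamma^h\wedge\omega^h$, which (the test form $\gamma^h$ having degree $1-k$) is valid only for $k=1$, so carrying out your case-by-case check faithfully would surface a sign that must be absorbed into the convention for $\star_h$ or $\delta^h$ rather than confirming a residual of $+1$ in both cases.
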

\begin{proof}
\begin{eqnarray}
\int \gamma^h \wedge \star (\star_h \diff \omega^h)
&=&(-1)^{k-1}\int \gamma^h \wedge \diff \omega^h \\
&=&(-1)^k\int \diff \gamma^h \wedge \omega^h \\
&=&(-1)^k\int \diff \gamma^h \wedge \star (\star_h \omega^h) \\
&=& \int \gamma^h \wedge \star \delta^h (\star_h \omega^h), \quad
\forall \gamma^h \in \hat{\Lambda}^{k-1}_p,
\end{eqnarray}
where we may integrate by parts in the second step since $\diff\gamma^h$
and $\diff\omega^h$ are both well-defined. 
\end{proof}
For example, this means that we may start with
$\star\tilde{u}^h\in \hat{\Lambda}^1_p$, and either obtain
the primal vorticity $\zeta^h_p\in \hat{\Lambda}^0_p$ by directly
applying $\delta^h$, or by inverting $\star_h$ to get
$\tilde{v}^h \in \hat{\Lambda}^1_d$, applying
$\diff$ to get the dual vorticity $\zeta^h_d\diff{S}\in
\hat{\Lambda}^2_d$, and then projecting back to $\hat{\Lambda}^0_p$ with $\star_h$,
 \emph{i.e.},
\begin{equation}
\delta^h(\star\tilde{u}^h)=\star_h\diff\star_h^{-1}(\star\tilde{u}^h).
\end{equation}

Next we introduce the primal-dual grid version of equations (\ref{u
  eqn}-\ref{D eqn}). Within this framework, we retain the same
equation for $D^h$ on the primal grid, and modify the Coriolis term in
the velocity equation as follows:
\begin{eqnarray}
\label{u eqn dual}
\dd{}{t}\int_{\Omega}(\star\tilde{w}^h)\wedge\star(\star\tilde{u}^h) + \int_{\Omega}\star\tilde{w}^h\wedge\star_h \tilde{Q}^h 
- \int_{\Omega}\diff(\star\tilde{w}^h)\wedge\left(g(D^h+b^h)+K^h\right)
, \qquad \forall \star\tilde{w}^h  \in \hat{\Lambda}^1_p,
\end{eqnarray}
with $\tilde{Q}^h\in \hat{\Lambda}^1_d$, and
$\star\tilde{F}^h\in \hat{\Lambda}^1_p$. Using
$\delta^h$ and $\star_h$, we can rewrite the velocity equation as 
\begin{equation}
\label{u dp}
\pp{}{t}\star\tilde{u}^h + \star_h\tilde{Q}^h
+ \delta^h\left(g(D^h+b^h)+K^h\right)=0.
\end{equation}
\begin{theorem}[Primal vorticity conservation]
  Let $\star\tilde{u}^h\in \hat{\Lambda}^1_p$ and $D^h\diff S\in
  \hat{\Lambda}^2_p$ satisfy Equations \eqref{u eqn dual} and \eqref{D
    ptwise} respectively. Then the primal vorticity defined by $\zeta^h_p = \delta^h
  \star\tilde{u}^h$ satisfies
\begin{equation}
\label{zeta p}
\pp{}{t}\zeta^h_p + \delta^h\star_h\tilde{Q}^h
=0.
\end{equation}
Furthermore, the primal vorticity is conserved.
\end{theorem}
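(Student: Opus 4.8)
The plan is to read \eqref{zeta p} as the exact dual-grid mirror of the continuous derivation of the vorticity equation: in the smooth case one applies $\diff$ to \eqref{u eqn} and uses $\diff^2=0$ to annihilate the Bernoulli (pressure) term, whereas here one applies the discrete codifferential $\delta^h$ to the velocity equation and uses the nilpotency $(\delta^h)^2=0$ proved above. Since $\tilde{Q}^h\in\hat{\Lambda}^1_d$, the Coriolis $1$-form $\star_h\tilde{Q}^h$ lies in $\hat{\Lambda}^1_p$, so every term on which we act sits in the primal complex, and $\delta^h$ sends it into $\hat{\Lambda}^0_p$, where $\zeta^h_p$ lives.

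First I would regard the velocity equation in its strong form \eqref{u dp}. This is legitimate because all three of $\pp{}{t}\star\tilde{u}^h$, $\star_h\tilde{Q}^h$ and $\delta^h(g(D^h+b^h)+K^h)$ belong to $\hat{\Lambda}^1_p$, so that \eqref{u eqn dual}, holding for every test function in $\hat{\Lambda}^1_p$ together with the nondegeneracy of the $L_2$ pairing there, forces the corresponding strong residual to vanish in $\hat{\Lambda}^1_p$, which is \eqref{u dp}. Applying the (time-independent, linear) map $\delta^h$ to \eqref{u dp} and commuting it past $\pp{}{t}$ gives
\begin{equation}
\pp{}{t}\delta^h\star\tilde{u}^h+\delta^h\star_h\tilde{Q}^h+(\delta^h)^2\left(g(D^h+b^h)+K^h\right)=0.
\end{equation}
By the definition $\zeta^h_p=\delta^h\star\tilde{u}^h$ the first term is $\pp{}{t}\zeta^h_p$, and by $(\delta^h)^2=0$ the Bernoulli term drops out, leaving exactly \eqref{zeta p}. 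Note that the mass equation \eqref{D ptwise} is not actually needed here; only the velocity equation enters.

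For global conservation I would integrate \eqref{zeta p} over the closed surface $\Omega$, giving $\dd{}{t}\int_{\Omega}\zeta^h_p\diff S=-\int_{\Omega}(\delta^h\star_h\tilde{Q}^h)\diff S$. The right-hand side vanishes on choosing the constant test function $\gamma^h=1$, which belongs to the continuous space $\hat{\Lambda}^0_p$: writing the surface integral of the $0$-form as a $\star$-pairing and using the defining relation for $\delta^h$,
\begin{equation}
\int_{\Omega}(\delta^h\star_h\tilde{Q}^h)\diff S=\int_{\Omega}1\wedge\star\delta^h(\star_h\tilde{Q}^h)=\int_{\Omega}\diff(1)\wedge\star(\star_h\tilde{Q}^h)=0,
\end{equation}
since $\diff(1)=0$. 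Hence $\dd{}{t}\int_{\Omega}\zeta^h_p\diff S=0$, which is the primal-dual analogue of the global vorticity conservation obtained earlier by taking $\gamma^h=1$.

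The one step requiring genuine care is the passage from the weak equation \eqref{u eqn dual} to the termwise action of $\delta^h$, i.e. the justification that \eqref{u dp} holds as an identity in $\hat{\Lambda}^1_p$; everything after that is a one-line consequence of $(\delta^h)^2=0$. If one instead insists on staying weak throughout --- testing \eqref{u eqn dual} with $\star\tilde{w}^h=\diff\gamma^h$ for $\gamma^h\in\hat{\Lambda}^0_p$, so that the pressure term dies through $\diff^2\gamma^h=0$ and the time-derivative term reorganises into $\dd{}{t}\int_{\Omega}\gamma^h\wedge\zeta^h_p\diff S$ via the definition of $\delta^h$ --- then the only delicate bookkeeping is reconciling the Coriolis contribution with $\delta^h\star_h\tilde{Q}^h$, which is precisely where the $\star$ and $\star_h$ conventions must be tracked carefully.
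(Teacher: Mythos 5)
Your proof is correct and takes essentially the same route as the paper: the paper likewise works from the strong form \eqref{u dp}, applies $\delta^h$ and uses $(\delta^h)^2=0$ to annihilate the Bernoulli term, and obtains global conservation by testing against the constant function so that $\diff(1)=0$ kills the flux term. The only difference is one of detail --- you spell out the weak-to-strong passage from \eqref{u eqn dual} to \eqref{u dp} (and flag the $\star$ versus $\star_h$ bookkeeping in the Coriolis term), whereas the paper simply asserts that rewriting before the theorem and, like you, never needs the mass equation \eqref{D ptwise}.
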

\begin{proof}
Applying $\delta^h$ to equation \eqref{u dp} gives
\begin{equation}
\pp{}{t}\delta^h\star\tilde{u}^h + \delta^h\star_h\tilde{Q}^h
=0,
\end{equation}
since $(\delta^h)^2=0$. Global conservation follows directly since
\begin{eqnarray}
\int_{\Omega} \zeta^h_p \diff S = \int_{\Omega} \underbrace{(\diff 1)}_{=0}
\wedge \tilde{Q}^h=0.
\end{eqnarray}
\end{proof}
\begin{theorem}[Dual vorticity conservation]
Let $\star\tilde{u}^h\in \hat{\Lambda}^1_p$ and $D^h\diff S\in
\hat{\Lambda}^2_p$ satisfy Equations \eqref{u eqn dual} and \eqref{D
    ptwise} respectively. Then the dual grid vorticity $\zeta^h_d\diff{S}\in
\hat{\Lambda}^2_0$ given by $\star_h\zeta^h_d\diff{S}=\zeta^h_p$
satisfies
\begin{equation}
\label{zeta_d eqn}
\pp{}{t}\zeta^h_d\diff S + \diff\tilde{Q}^h=0.
\end{equation}
Furthermore, $\zeta^h_d$ is locally conserved.
\end{theorem}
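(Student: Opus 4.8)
The plan is to obtain the dual vorticity equation \eqref{zeta_d eqn} algebraically from the primal vorticity equation \eqref{zeta p}, using the discrete Hodge star $\star_h$ as a translator between the primal and dual grids, and then to deduce local conservation from the flux form of the result in exactly the way mass conservation was deduced from \eqref{D cg}.

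First I would take the primal vorticity equation \eqref{zeta p}, $\pp{}{t}\zeta^h_p + \delta^h\star_h\tilde{Q}^h = 0$, and substitute the defining relation $\zeta^h_p = \star_h(\zeta^h_d\diff S)$. Because $\star_h$ is a fixed linear map between the finite element spaces, depending only on the metric and the spaces and not on time, it commutes with $\pp{}{t}$, so the first term becomes $\star_h\pp{}{t}(\zeta^h_d\diff S)$. For the second term I would invoke the commuting-diagram identity proved above (the theorem mapping $\diff$ to $\delta^h$) with $k=1$ and $\omega^h=\tilde{Q}^h\in\hat{\Lambda}^1_d$, which reads $\delta^h\star_h\tilde{Q}^h=\star_h\diff\tilde{Q}^h$. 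Substituting both identities gives
\begin{equation}
\star_h\left(\pp{}{t}(\zeta^h_d\diff S) + \diff\tilde{Q}^h\right) = 0,
\end{equation}
and since $\star_h$ was required to be invertible, cancelling it yields \eqref{zeta_d eqn}.

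For local conservation I would integrate \eqref{zeta_d eqn} over a single dual element $e_d$ and apply Stokes' theorem, obtaining $\dd{}{t}\int_{e_d}\zeta^h_d\diff S = -\int_{\partial e_d}\tilde{Q}^h$. Since $\tilde{Q}^h\in\hat{\Lambda}^1_d$ is the circulation proxy on the dual grid, it has continuous tangential components across dual-element edges, so the line integral of $\tilde{Q}^h$ along each shared edge enters the budgets of the two adjacent elements with opposite sign. Hence the vorticity leaving $e_d$ is exactly the vorticity entering its neighbours, which is local conservation. This is the mirror image of the mass conservation proof, with tangential continuity of $\hat{\Lambda}^1_d$ playing the role that normal continuity of $\hat{\Lambda}^1$ played there.

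The step I expect to need the most care is the algebra in the second paragraph: I must confirm that the commuting-diagram identity is applied at the correct degree and in the correct direction, and that $\star_h$ really does pass through $\pp{}{t}$ (immediate once one notes $\star_h$ is time-independent). The genuinely restrictive ingredient is the invertibility of $\star_h$, which holds only for special matched pairs of primal and dual spaces and is precisely what licenses the final cancellation.
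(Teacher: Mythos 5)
Your proposal is correct and follows essentially the same route as the paper: substitute $\zeta^h_p=\star_h\zeta^h_d\diff S$ into the primal vorticity equation \eqref{zeta p}, apply the commuting-diagram identity $\delta^h\star_h\tilde{Q}^h=\star_h\diff\tilde{Q}^h$, cancel $\star_h$ by invertibility, and then obtain local conservation from Stokes' theorem together with the tangential continuity of $\tilde{Q}^h\in\hat{\Lambda}^1_d$. The only cosmetic difference is that the paper additionally records the identity $\zeta^h_d=\diff\tilde{v}^h$ (the dual vorticity as the strong curl of the dual velocity proxy), which is not needed for the conservation argument itself.
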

\begin{proof}
Substituting
$\star\tilde{u}^h=\star_h\tilde{v}^h$ with
$\tilde{v}^h\in \hat{\Lambda}^1_d$, we obtain
\begin{equation}
\star_h\zeta^h_d\diff{S} = \zeta^h_p = \delta^h\star\tilde{u}^h
= \delta^h\star_h\tilde{v}^h=\star_h\diff\tilde{v}^h,
\end{equation}
and hence $\zeta^h_d=\diff\tilde{v}^h$ by
invertibility of $\star_h$. Substitution into equation \eqref{zeta p}
and application of the commutation relations for $\star_h$ gives
\begin{equation}
\pp{}{t}\star_h\zeta^h_d\diff S + \star_h\diff\tilde{Q}^h
=0,
\end{equation}
and hence we obtain equation \eqref{zeta_d eqn} by invertibility of
$\star_h$. Local conservation follows since
\begin{equation}
\dd{}{t}\int_{e'} \zeta^h_d\diff S 
= -\int_{e'}\diff\tilde{Q}^h=-\int_{\partial e'}\tilde{Q}^h,
\end{equation}
for each dual element $e'$, and local conservation follows from
the appropriate continuity of $\tilde{Q}^h$, 
so that  the flux integral takes the same value on either side
of $\partial e'$.
\end{proof}
We now make a particular choice of $\tilde{Q}^h$, guided
by the requirement of mass consistent advection of the dual potential
vorticity $q_d^h\diff S\in \hat{\Lambda}^2_d$, defined by
\begin{equation}
\label{pv def dual}
\int \phi^h \wedge q_d^hD^h\diff{S}
= \int \phi^h \wedge 
(\zeta^h_d+f)\diff{S}, \qquad \forall \phi^h\diff{S}\in \hat{\Lambda}^2_d.
\end{equation}
Here we are seeking locally conservative schemes that dissipate
potential enstrophy, and hence we propose the following upwind scheme,
\begin{equation}
\label{pv eqn dual}
\dd{}{t}\int_{e_d}\phi^h \wedge q^h_d D^h\diff{S} - \int_{e_d}\diff \phi^h\wedge
q^h_d\star\tilde{F}^h + \int_{\partial e_d}
\phi^h \wedge q^u_d \star\tilde{F}^h\diff{S} = 0,
\quad \forall \phi^h\diff S \in \hat{\Lambda}^2_d,
\end{equation}
for each dual element $e_d$ with boundary $\partial e_d$, where
$q^u_d$ is an appropriate upwind flux\footnote{Here the
  only known cases of invertible $\star_h$ are with piecewise constant
  $\hat{\Lambda}^2_d$ spaces, and so reconstruction is required
  to obtain higher order fluxes.} that takes the same
values on both sides of the boundary $\partial e_d$.
\begin{theorem}[Dual potential vorticity conservation and mass consistency]
  Let $\star\tilde{u}^h\in \hat{\Lambda}^1_p$ and $D^h\diff S\in
  \hat{\Lambda}^2_p$ satisfy Equations \eqref{u eqn dual} and \eqref{D
    ptwise} respectively. There exists a choice of $\tilde{Q}^h\in
  \hat{\Lambda}^1_d$ such that the diagnosed dual potential vorticity
  $q_d^h\diff S\in \hat{\Lambda}^2_d$ obtained from equation \eqref{pv
    def dual} satisfies equation \eqref{pv eqn dual}, and
  $\tilde{Q}^h$ can be obtained locally in each element.
  Hence $q^h_d\diff S$ is locally conserved.  Furthermore, if $q^h_d$ is
  is constant then $\pp{q^h_d}{t}=0$.
\end{theorem}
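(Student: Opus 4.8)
The plan is to choose the Coriolis flux $\tilde{Q}^h$ in Equation \eqref{u eqn dual} as a Fortin-type projection onto $\hat{\Lambda}^1_d$ of the intended potential-vorticity flux, by analogy with the mass-flux construction of Theorem \ref{mass flux prop}, and then to extract Equation \eqref{pv eqn dual} from the dual vorticity equation \eqref{zeta_d eqn}, which already holds for any solution of \eqref{u eqn dual}. First I would record what \eqref{pv def dual} actually says: taking $\phi^h$ to be the indicator of a dual element $e_d$ and using that $q^h_d$ is piecewise constant on the dual mesh gives the element-averaged identity $q^h_d|_{e_d}\int_{e_d}D^h\diff S=\int_{e_d}(\zeta^h_d+f)\diff S$, which determines $q^h_d$ uniquely whenever $D^h>0$. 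The pointwise identity $q^h_dD^h\diff S=(\zeta^h_d+f)\diff S$ fails, because $D^h$ and $q^h_d$ are piecewise constant on different (primal and dual) meshes, but only this element-integrated version is needed below.

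Next I would construct $\tilde{Q}^h\in\hat{\Lambda}^1_d$ by a Fortin-type projection modelled on Theorem \ref{mass flux prop}, with boundary data the upwind flux $q^u_d\star\tilde{F}^h$ and interior data $q^h_d\star\tilde{F}^h$. In the lowest-order setting relevant here (piecewise-constant $\hat{\Lambda}^2_d$, so that $\hat{\Lambda}^1_d$ is a lowest-order edge-element space) this reduces to the purely local prescription of setting, for each dual edge $\ell$, the edge circulation $\int_\ell\tilde{Q}^h$ equal to the line integral $\int_\ell q^u_d\star\tilde{F}^h$ of the flux $1$-form along $\ell$. Because $q^u_d$ is single-valued on $\ell$ and $\star\tilde{F}^h$ is a fixed $1$-form, this number is shared by the two dual elements meeting at $\ell$, so $\tilde{Q}^h$ is tangentially continuous and genuinely lies in $\hat{\Lambda}^1_d$; since it uses only data attached to each edge, the construction is local.

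With $\tilde{Q}^h$ chosen this way and inserted into \eqref{u eqn dual}, Equation \eqref{zeta_d eqn} holds. Testing it against $\phi^h\diff S\in\hat{\Lambda}^2_d$ over $e_d$, using $\partial_t f=0$ and the element-averaged form of \eqref{pv def dual} to replace $(\zeta^h_d+f)\diff S$ by $q^h_dD^h\diff S$, and integrating the $\diff\tilde{Q}^h$ term by parts, turns \eqref{zeta_d eqn} into
\[
\dd{}{t}\int_{e_d}\phi^h\wedge q^h_dD^h\diff S
-\int_{e_d}\diff\phi^h\wedge\tilde{Q}^h
+\int_{\partial e_d}\phi^h\wedge\tilde{Q}^h=0 ;
\]
the defining conditions of the projection then replace $\tilde{Q}^h$ by $q^u_d\star\tilde{F}^h$ on $\partial e_d$ and by $q^h_d\star\tilde{F}^h$ in the interior (the latter being void for piecewise-constant $\hat{\Lambda}^2_d$), reproducing \eqref{pv eqn dual}. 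Local conservation of $q^h_dD^h\diff S$ is then immediate from the tangential continuity of $\tilde{Q}^h$, since the boundary flux across any shared dual edge is equal and opposite for the two adjacent elements. For the mass-consistency claim I would set $q^h_d\equiv\bar q$ constant, so that $q^u_d=\bar q$ as well; applying the product rule to the time-derivative term of \eqref{pv eqn dual}, substituting \eqref{D ptwise} for $\partial_t D^h\diff S$, and cancelling the $\bar q$-weighted mass-flux terms leaves $\int_{e_d}\phi^h\wedge(\partial_t q^h_d)D^h\diff S=0$ for all $\phi^h$, whence $\partial_t q^h_d=0$ because $D^h>0$.

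The step I expect to be the main obstacle is the construction itself: establishing unisolvence of the Fortin conditions on a dual element and, in the general higher-order case, handling the primal--dual coupling, since $\star\tilde{F}^h\in\hat{\Lambda}^1_p$ must be integrated against dual-element and dual-edge test functions. The mesh geometry relating primal and dual edges enters precisely here, and one must check that the boundary trace data are compatible across dual edges so that $\tilde{Q}^h$ inherits the continuity needed both for $\diff\tilde{Q}^h$ to be defined and for local conservation to hold; this is also where the restriction to spaces with invertible $\star_h$ becomes relevant.
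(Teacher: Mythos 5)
Your proposal is correct and follows essentially the same route as the paper: you construct $\tilde{Q}^h$ by the same Fortin-type projection (edge conditions matching $q^u_d\star\tilde{F}^h$, interior conditions matching $q^h_d\star\tilde{F}^h$, plus the gauge condition), derive equation \eqref{pv eqn dual} by combining the dual vorticity equation \eqref{zeta_d eqn} with the potential vorticity definition \eqref{pv def dual} and integrating by parts, and prove mass consistency by the same product-rule/cancellation argument using \eqref{D ptwise} and positivity of $D^h$ (the paper phrases this last step via an auxiliary $D^h$-weighted test function, but it is the same argument). Your explicit reduction of the projection to edge circulations in the lowest-order case, and your flagging of unisolvence and trace compatibility, are details the paper asserts without proof, so they are consistent refinements rather than a departure.
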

\begin{proof}
  Define $\tilde{Q}^h$ on each dual element $e_d$ according to the
  following conditions:
\begin{enumerate}
\item
\begin{equation}
\int_{f_d}\phi^h \wedge \tilde{Q}^h  =  
\int_{f_d}\phi^h\wedge q^u_d\star\tilde{F}^h, \quad
\forall \phi^h\diff S \in \hat{\Lambda}^2_d(e_d), 
\end{equation}
where $f_d$ are the edges of $e_d$,
\item 
\begin{equation}
\int_{e_d} \diff\phi^h\wedge\tilde{Q}^h =\int_{e_d}\diff\phi^h\wedge
q^h_d\star\tilde{F}^h %- \int_{\partial e_d}\phi^h\wedge q^u_d\star\tilde{F}^h, 
\quad
\forall \phi^h\diff S \in \hat{\Lambda}^2_d(e_d), 
\end{equation}
\item 
\begin{equation}
\int_{e_d}\diff \gamma^h \wedge \tilde{Q}^h = 0, \qquad
\forall \gamma^h \in \hat{\Lambda}^0_d(e_d) \textrm{ with }
\gamma^h  = 0 \textrm{ on } \partial e_d.
\end{equation}
\end{enumerate}
These conditions are sufficient to determine $\tilde{Q}^h$
when restricted locally to $e_d$, and lead to the correct continuity
conditions. Then
\begin{eqnarray}
\dd{}{t}\int_{e_d} \phi^h \wedge q^h_dD^h\diff S & = & 
-\int_{e_d} \phi^h\wedge \diff \tilde{Q}^h \\
& = & \int_{e_d} \diff \phi^h \wedge \tilde{Q}^h
- \int_{\partial e_d} \phi^h \wedge \tilde{Q}^h \\
& = & \int_{e_d} \diff \phi^h \wedge q^h\star\tilde{F}^h
- \int_{\partial e_d} \phi^h \wedge q^u_d\star\tilde{F}^h
\end{eqnarray}
as required.  Local conservation follows from choosing $\phi^h$ constant
in $e_d$, giving
\begin{equation}
\dd{}{t}\int_{e_d}q^h_dD^h\diff S = -\int_{\partial e_d}q^u_d\star\tilde{F}^h,
\end{equation}
and appropriate continuity of $\star\tilde{F}^h$ and
$q^u_d$.  To show mass consistency, suppose that $q^h_d$ be
constant. For any test function $(\phi^h)'\diff S\in \hat{\Lambda}^2_d$,
and a given dual element $e_d$, define $\phi^h\diff S\in
\hat{\Lambda}^2$ such that
\begin{equation}
\int_{e_d} \beta^h \diff S \wedge \star (\phi^h)'\diff S = 
\int_{e_d} \beta^h \diff S \wedge \star \phi^h D^h\diff S, 
\quad \forall \beta^h\diff S \in 
\hat{\Lambda}^2_d.
\end{equation}
This is always possible when $D^h>0$. 
Then from equation 
\eqref{pv eqn dual},
\begin{eqnarray}
\dd{}{t}\int_{e_d} (\phi^h)' \wedge \pp{q^h_d}{t}\diff S & = & 
\int_{e_d} \phi^h \wedge \pp{q^h_d}{t}D^h\diff S \\
& = & \int_{e_d} \phi^h \wedge (q^h_dD^h)_t\diff S - \int_{e_d} \phi^h
\wedge q^h_dD^h_t\diff S \\
&=&  \int_{e_d}\diff \phi^h\wedge q^h_d\star\tilde{F}^h
- \int_{\partial e_d}
\phi^h \wedge \underbrace{q^u_d}_{=q^h_d} \star\tilde{F}^h \\
& & \qquad + \int_{e_d} \phi^h \wedge q^h_d\diff(\star\tilde{F}^h)=0,
\quad \forall (\phi^h)'\diff S \in \hat{\Lambda}^2_d,
\end{eqnarray}
 where in the last line we have used the fact the $q^h_d$ is 
constant so $q^u_d=q^h_d$, together with Stokes' theorem.
Hence $\pp{q^h_d}{t}=0$.
\end{proof}

\section{Numerical tests}
\label{numerical}
In this section we provide some numerical results, primarily to show
that these are not just theoretical results of academic interest but
can be used in practical codes (with the potential to extend to
3D). Extensive test case results quantifying accuracy and convergence,
as well as demonstrating the desirable properties for which the
schemes are designed, will be presented in subsequent publications.
In particular, we benchmark using our schemes on a standard
meteorological test case, namely case 5 from \citep{Wi1992}, which is
a steady balanced flow on the sphere which is disturbed at time $t=0$
by the appearance of a conical mountain at mid-latitudes.  The errors
are computed by comparing the layer depth at 15 days with a resolved
pseudo-spectral solution as prescribed in the test case specification.

The test was run with four different finite element spaces on
triangles with an icosahedral mesh using the primal formulation, and
two different finite element spaces using the primal-dual formulation,
one on hexagons with a dual icosahedral mesh and one on quadrilaterals
on a cube mesh. The primal spaces used were P1/RT0/P0 (Linear
continuous for vorticity, lowest Raviart-Thomas space for velocity,
piecewise constant for pressure, denoted the
$\mathcal{P}_0^-\Lambda^k$ spaces in \cite{ArFaWi2006}), P2/BDM1/P0
(Quadratic continuous for vorticity, lowest Brezzi-Douglas-Marini
space for velocity, piecewise constant for pressure, denoted the
$\mathcal{P}_{2-k}\Lambda^k$ spaces in \cite{ArFaWi2006}),
P3/BDM2/P1DG (Cubic continuous for vorticity, second
Brezzi-Douglas-Marini space for velocity, discontinuous linear for
pressure, denoted the $\mathcal{P}_{3-k}\Lambda^k$ spaces in
\cite{ArFaWi2006}), and P2B/BDFM1/P1DG (Quadratic continuous with
cubic bubbles for vorticity, Brezzi-Douglas-Fortin-Marini space for
velocity, discontinuous linear for pressure, as discussed in
\cite{CoSh2012}). The energy-conserving scheme was used for layer
depth and the APVM stabilisation was used for potential vorticity,
with centred-in-time semi-implicit time integration. The primal-dual
spaces were lowest order P1/RT1/P0 on the dual mesh of the icosahedral
triangulation using the construction of \cite{BuCh2007,Ch2008}, and an
analogous construction on a cubed sphere mesh made of quadrilaterals.
Third-order space-time upwind schemes were used for both layer depth
and potential vorticity by using a Crowley scheme to interpolate the
high-order flux from neighbouring elements. Error plots are 
shown in Figure \ref{error table}, and approximately second-order
convergence is observed for all schemes except for BDM1.

\begin{figure}
\centerline{
\includegraphics[width=10cm]{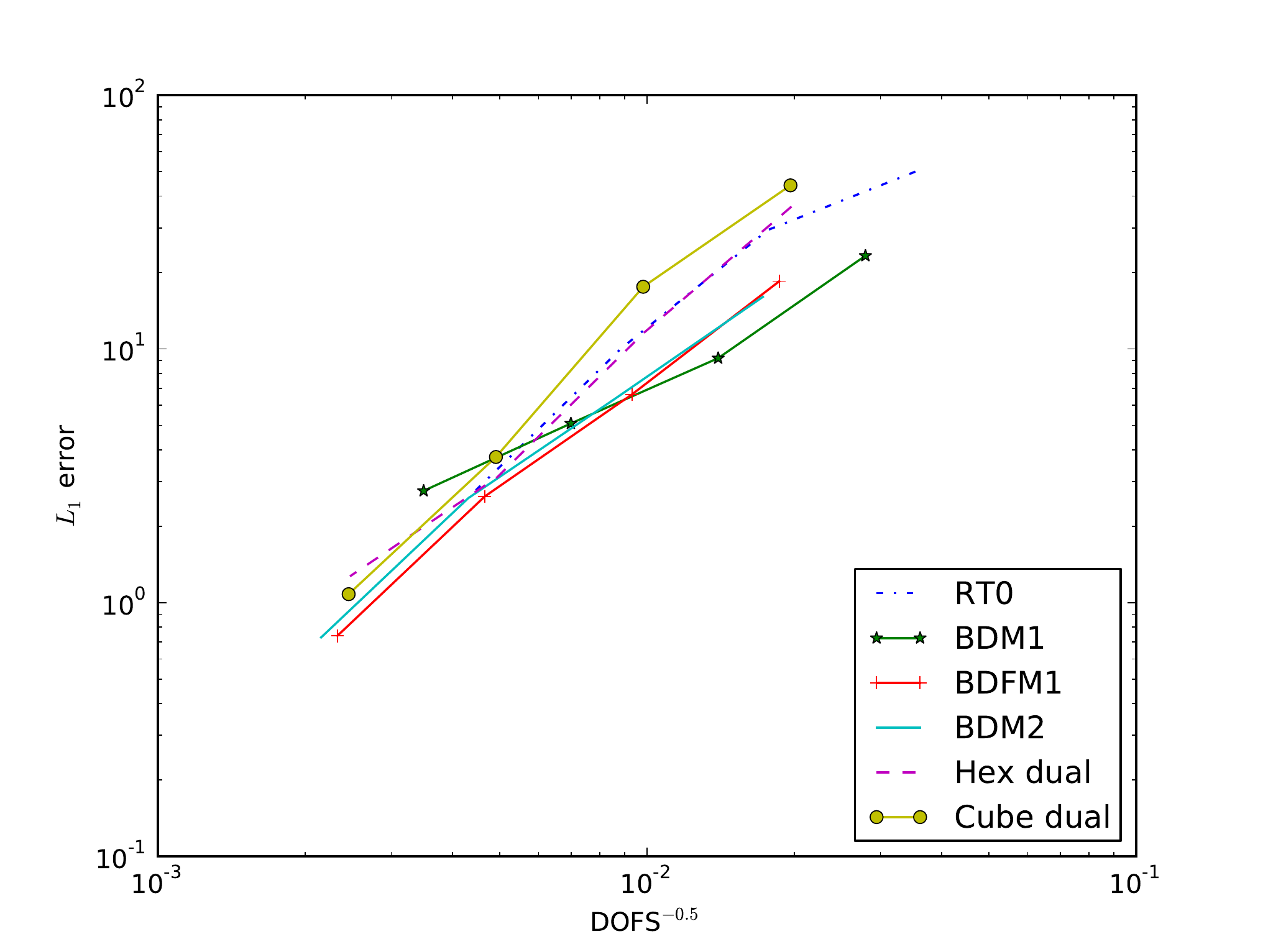}
\includegraphics[width=10cm]{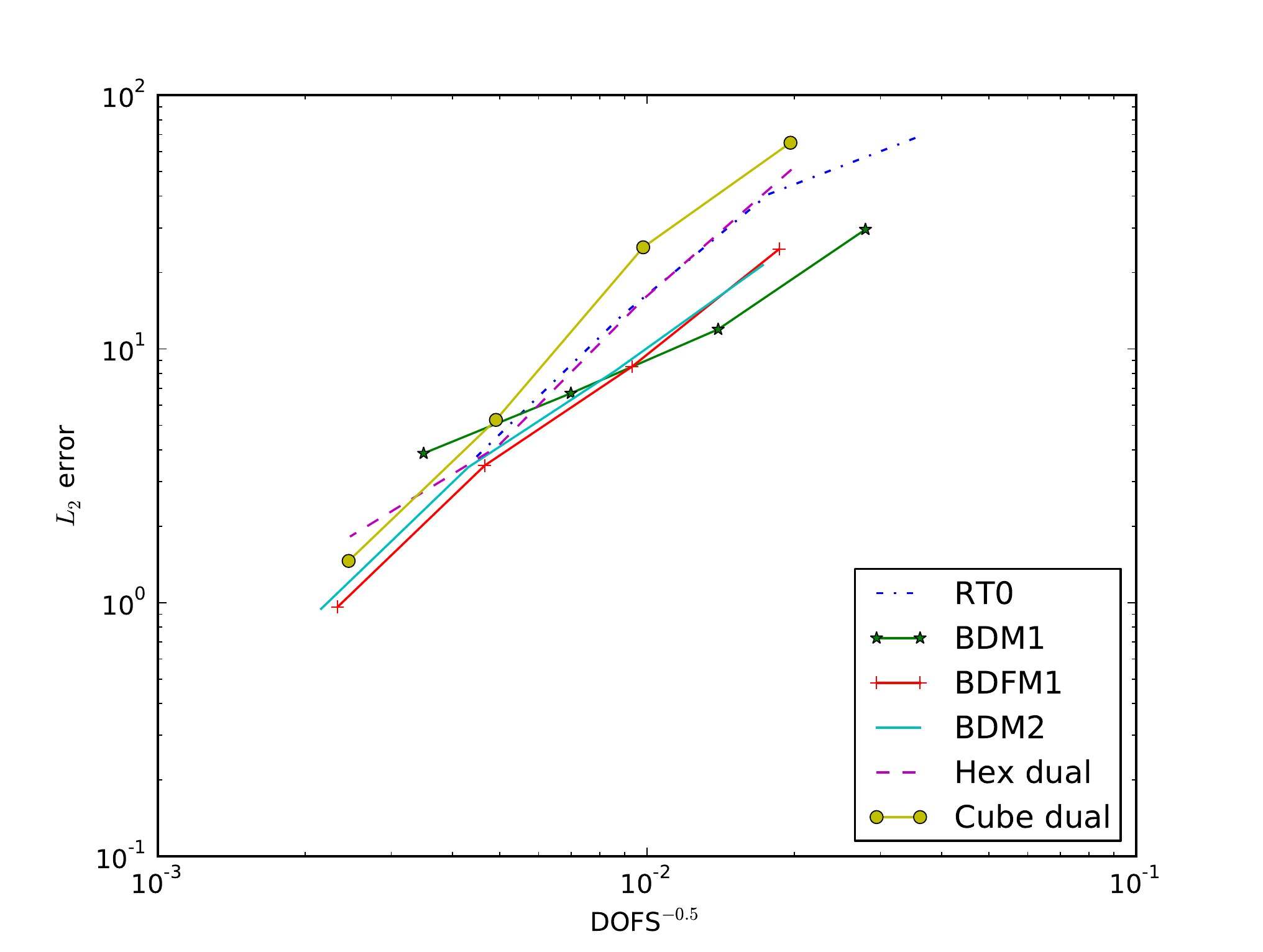}}
\centerline{\includegraphics[width=10cm]{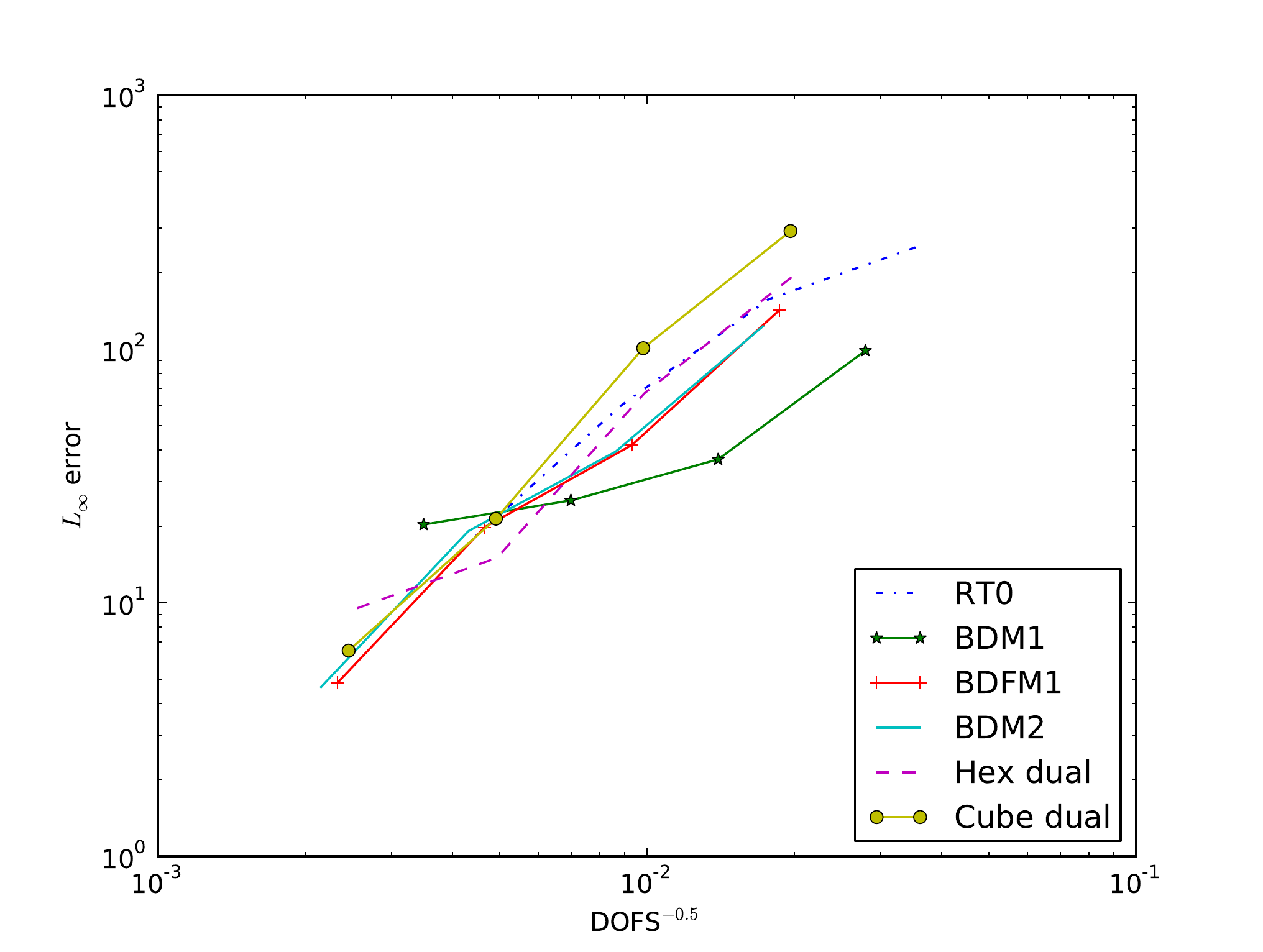}
}
\caption{\label{error table}Tables showing numerical errors 
  in the layer depth at 15 days for the standard ``flow over a mountain''
  testcase, relative to a resolved pseudo-spectral solution, for
  various different finite element spaces.}
\end{figure}

\section{Summary and outlook}
\label{summary and outlook}
In this paper, we used the finite element exterior calculus framework
to develop two formulations for the shallow-water equations, a primal
formulation that is defined on a single mesh where divergence is
defined in the strong form but vorticity must be evaluated weakly
using integration by parts, and a primal-dual formulation that makes
use of a dual mesh where vorticity can also be computed in the strong
form. Both of these formulations have a conserved diagnostic potential
vorticity, that satisfies mass consistency \emph{i.e.} constant $q$
stays constant. In the primal mesh case we are able to choose energy
and enstrophy conserving mass and potential vorticity fluxes.  Both
formulations provide a way to control oscillations in the
divergence-free component of the velocity field (the component that
dominates in large scale balanced flow in the atmosphere) by ensuring
that the potential vorticity remains mass-consistent and
oscillation-free. In the primal-dual case this can be achieved since
the potential vorticity is diagnosed on a discontinuous space where
discontinuous Galerkin/finite volume methods can be used to provide
stable shape-preserving potential vorticity fluxes. In the primal
case, the potential vorticity is computed in a continuous finite
element space, but streamline-upwind Petrov-Galerkin methods with
discontinuity capturing are compatible with the framework and can be
used to provide stable potential vorticity fluxes.

This work is part of the UK GungHo Dynamical Core project, which is a
NERC/STFC collaboration between UK academics and the UK Met Office to
design a dynamical core for the Unified Model that will perform well
on the next generation of massively parallel supercomputers. In Phase
1 of the project, one of the main goals is to determine the horizontal
discretisation that will be used, with the shallow-water equations on
the sphere providing an environment to investigate this. The aim is to
develop discretisations on a pseudouniform grid\footnote{A grid for
  which the ratio of smallest to largest edge lengths remains bounded
  as the maximum edge length tends to zero.} that have all of the
desirable properties listed in the introduction, whilst maintaining
the accuracy of the current model. Numerical accuracy is crucial since
it reduces grid imprinting (structure in the numerical errors that
reflects the structure of the grid, \emph{e.g.} larger errors near the
corners of a cubed sphere). This work opens up a number of
possibilities that could be sufficiently accurate for operational
use. In \cite{CoSh2012} it was shown that to avoid spurious mode
branches it is necessary to select finite element spaces that have a
2:1 ratio of velocity DOFs to pressure DOFs, which suggests the BDFM1
space on triangles with an icosahedral mesh in the primal formulation
or RT0 on quadrilaterals with a cubed sphere mesh in the primal-dual
formulation. There is an argument to be made that spurious Rossby mode
branches arising from increasing velocity DOFs relative to this ratio
are not harmful since they have very low frequencies and will just be
passively advected by the flow. This suggests the BDM1 space on
triangles in the primal formulation or the RT0 space on hexagons in
the primal-dual formulation, which both have a 3:1 ratio. The next
steps in this work are to analyse the numerical convergence and
dispersion relations of all of these schemes and to benchmark them
against the usual suites of testcases and against solutions from the
Unified Model formulation.

\paragraph{Acknowledgements} The authors would like to thank Thomas
Dubos for suggesting to look at a primal-dual finite element
formulation, Darryl Holm for useful guidance and comments on the
paper, Andrew McRae for providing primal scheme testcase results,
Hilary Weller for providing reference pseudospectral solutions, and
the GungHo UK dynamical core team for interesting discussions and
debate. This work is supported by the Natural Environment Research
Council.

\bibliographystyle{model1-num-names}
\bibliography{feec_sw}

%% Authors are advised to submit their bibtex database files. They are
%% requested to list a bibtex style file in the manuscript if they do
%% not want to use model1-num-names.bst.

%% References without bibTeX database:

% \begin{thebibliography}{00}

%% \bibitem must have the following form:
%%   \bibitem{key}...
%%

% \bibitem{}

% \end{thebibliography}

\end{document}